\newcommand{\disk}{\ensuremath{\mathbb{D}} } 
\newcommand{\sphere}{\overline{\Bbb{C}}} 
\newcommand{\riem}{\Sigma}  
\renewcommand{\Bbb}[1]{\ensuremath{\mathbb{#1}}}
\theoremstyle{plain}
        \newtheorem{theorem}{Theorem}[section]
        \newtheorem{proposition}[theorem]{Proposition}
        \newtheorem{corollary}[theorem]{Corollary}
\theoremstyle{definition}
        \newtheorem{definition}[theorem]{Definition}
\theoremstyle{remark}
    \newtheorem{remark}[theorem]{Remark}
\numberwithin{equation}{section} 
\numberwithin{figure}{section} 
\title{Faber series for $L^2$ holomorphic one-forms on Riemann surfaces with boundary}
\author[E. Schippers]{Eric Schippers}
\author[M. Shirazi]{Mohammad Shirazi}
\thanks{Eric Schippers acknowledges the support of the Natural Sciences and Engineering Research Council of Canada (NSERC)}
\subjclass[2020]{30F30, 30E10, 30H20, 30C49}
\begin{document}

\begin{abstract}
    Consider a compact surface $\mathscr{R}$ with distinguished points $z_1,\ldots,z_n$ and conformal maps $f_k$ from the unit disk into non-overlapping quasidisks on $\mathscr{R}$ taking $0$ to $z_k$. Let $\riem$ be the Riemann surface obtained by removing the closures of the images of $f_k$ from $\mathscr{R}$. 
    We define forms which are meromorphic on $\mathscr{R}$ with poles only at $z_1,\ldots,z_n$, which we call Faber-Tietz forms. These are analogous to Faber polynomials in the sphere. We show that any $L^2$ holomorphic one-form on $\riem$ is uniquely expressible as a series of Faber-Tietz forms. This series converges both in $L^2(\riem)$ and uniformly on compact subsets of $\riem$.   
\end{abstract}

\maketitle

\begin{section}{Introduction}
 Faber series are series of polynomials associated to a Jordan domain in the sphere, which approximate holomorphic functions in $\Omega$ under sufficient assumptions on the domain and curve. Various choices of the sense of convergence, regularity of the domain, and regularity of the holomorphic function being approximated  are possible. Faber series have certain advantages over power series. For example, under many analytic choices, the Faber series converges uniformly to the function on arbitrary compact subsets of the domain in question.   For the general theory see the monographs of J. M Anderson and P. K. Suetin \cite{Anderson,Suetin}. In this paper, we give a generalization of Faber series for $L^2$ holomorphic one-forms on Riemann surfaces $\riem$ obtained by removing $n$ Jordan domains from a compact surface $\mathscr{R}$.  We call these Faber-Tietz series after H. Tietz who pioneered the theory of Faber series on Riemann surfaces. We consider Faber-Tietz series of an $L^2$ one-forms on Riemann surfaces converges in $L^2$, for Riemann surfaces obtained by removing non-overlapping quasidisks from a compact surface. 
	
 We begin by recalling some of the basic theory in the plane. 	
 A central aspect of Faber polynomials is that they are constructed from the Riemann map of the complement of the domain on which the approximation takes place.	
 Let $\mathbb{C}$ denote the complex plane, $\sphere$ the Riemann sphere, and $\disk = \{ z: |z|<1 \}$ the unit disk. 
 Let $f:\mathbb{D} \rightarrow \mathbb{C}$ be a conformal map. For integers $n \geq 1$, the Faber polynomials $\Phi_n$ associated to this map are the principal parts of $(f^{-1}(z))^{-n}$. Holomorphic functions on the complement of $f(\mathbb{D})$ in the sphere can be approximated by series in Faber polynomials, and under various analytic and geometric conditions this approximation has advantages over power series expansions. 
 
 The results of this paper are partly motivated by the following elegant theorem of Y. Shen \cite{ShenFaber}, with one direction obtained earlier by A. \c{C}avu\c{s} \cite{Cavus}. Let $U$ be the interior of the complement of $f(\disk)$ in the sphere. Every holomorphic function $h$ on $U$ with $L^2$ derivatives has a unique Faber series with derivatives convergent to $h$ in $L^2$ 
 if and only if $f(\disk)$ is a quasidisk.  (We have made some slight changes to the statement for consistency with the notation in this paper, without altering the essential content of the theorem.) This is closely related to a theorem of V. Napalkov and R. Yulmukhametov \cite{Nap_Yulm} which says the following. Let $\Omega$ be a Jordan domain in the plane and let $\mathcal{B}(\Omega)$ denote the Bergman space of $L^2$ holomorphic functions on $\Omega$ and $\overline{B(\Omega)}$, and $\overline{B(\Omega)}$ denote the anti-holomorphic $L^2$ functions.  Let $\Omega^*$ denote the exterior Jordan domain in the sphere. The integral transform from $\overline{\mathcal{B}(\Omega)} \rightarrow \mathcal{B}(\Omega^*)$
 \begin{equation} \label{eq:Hilbert_transform}
   \overline{h} \mapsto \iint_{\Omega} \frac{\overline{h(\zeta)}}{(\zeta-z)^2} dA_\zeta  \ \ z \in \Omega^* 
 \end{equation}
 is an isomorphism if and only if $\Omega$ is a quasidisk.  
 
 We generalize the result of \c{C}avu\c{s} and Shen in one direction to $L^2$ one-forms on Riemann surfaces $\mathscr{R}$ obtained by removing many quasidisks. In the case that $\mathscr{R}$ is a sphere, the one-forms reduce to $h(z)dz$ where $h$ is as above.  To do this we required a natural generalization of Faber polynomials to Riemann surfaces.
 Such generalizations were first considered by H. Tietz 
 \cite{Tietz52, Tietz53, Tietz57, Tietz572}. 
 The forms replacing the Faber polynomials are holomorphic on the compact surface $\mathscr{R}$, except for  poles at the distinguished points $z_1,\ldots,z_n$. The approach of Tietz was to define a Cauchy kernel on the Riemann surface, imposing normalizations to make it unique, and treating the Cauchy kernel as a generating function. See for example \cite{Duren_book,Pommerenkebook,Suetin} for generating functions for Faber polynomials in the sphere. Equivalently, one can treat the Cauchy kernel as defining an integral operator (the Faber operator) and obtain Faber series as the output of this operator.  This produces series which converge uniformly on compact subsets. The non-uniqueness of the Cauchy kernel for general Riemann surfaces comes from the existence of a $g$-dimensional family of holomorphic one-forms on the surface, available to add to a specific Cauchy kernel without changing its singular part. In the sphere, there are no non-trivial holomorphic one-forms so the non-uniqueness issue does not arise. 
 
 Our approach instead uses an operator of M. Schiffer \cite{Courant_Schiffer,Schiffer_Spencer,Schippers_Staubach_Plemelj} generalizing (\ref{eq:Hilbert_transform}) to Riemann surfaces. Although we do not emphasize the point in this paper, this operator is obtained by differentiating a Cauchy kernel we call the Cauchy-Royden kernel \cite{Royden}, which is in turn obtained from Green's function. The ``Schiffer operator'' has a holomorphic integral kernel, and we can obtain meromorphic forms on the compact surface with specified poles using the Schiffer operator. We call them Faber-Tietz forms, although these are not precisely the same as those defined by Tietz. We use isomorphism theorems for Schiffer and Faber operators of M. Shirazi \cite{Shirazi_thesis} (see also E. Schippers, M. Shirazi, and W. Staubach \cite{Schippers_Shirazi_Staubach}) and E. Schippers and W. Staubach \cite{Schippers_Staubach_scattering}, generalizing that of Napalkov and Yulmukhametov, in order to produce the series representations and convergence.  
 
 Our main results are as follows. Let $f_1,\ldots,f_n$ be conformal maps of the unit disk $\disk$ into a compact Riemann surface $\mathscr{R}$. Assume that $\Omega_k = f_k(\disk)$ are quasidisks with non-intersecting closure, and let $\Omega = \Omega_1 \cup \cdots \cup \Omega_n$. Let ${\riem}$ be the complement of the closure of $\Omega$ 
 in $\mathscr{R}$. We define Faber-Tietz forms, and show these are holomorphic on $\mathscr{R}$ except for specified poles at the points $z_k=f(0)$. We 
  show
 that every $L^2$ holomorphic one-form $\alpha$ on $\riem$ has a unique decomposition as a Faber series which converges in $L^2$ to $\alpha$. This series  also converges uniformly on compact subsets of $\riem$.  
 
 Here we are concerned with the development of explicit Faber-type series with a fixed basis.  
  Earlier density results were obtained by the authors with W. Staubach, for nested Riemann surfaces with boundary \cite{Schippers_Shirazi_Staubach}. It was shown that $L^2$ holomorphic one-forms on the inner surface can be approximated in $L^2$ by holomorphic one-forms on the outer surface (and similarly for Dirichlet spaces of functions). Density results for $L^2$ spaces of differentials of general orders (automorphic forms) were obtained by N. Askaripour and T. Barron \cite{Barron_Askaripour,Foth_Askaripour}, using Poincar\'e series; see also references therein for density results for $L^1$ holomorphic quadratic differentials. For other theorems involving approximation on a larger region in other senses such as uniform approximation, see e.g., Gauthier and Sharifi \cite{Gauthier_Sharifi}. For literature reviews see F. Sharifi \cite{Sharifi} and Fornæss et al. \cite{Fornaess_et_al}. 
\end{section}
\begin{section}{Preliminaries and background}
\begin{subsection}{Function spaces and Green's function of a compact surface}

\begin{definition}
 On any Riemann surface,  define the dual of the almost 
 complex structure,  $\ast$ in local coordinates $z=x+iy$,  by 
 \[  \ast (a\, dx + b \, dy) = a \,dy - b \,dx. \]
This is independent of the choice of coordinates.
It can also be computed in coordinates that for any complex function $h$ 
\begin{equation} \label{eq:Wirtinger_to_hodge}
    2 \partial_z h = dh + i \ast dh.
\end{equation}

\begin{definition}
 We say a complex-valued function $f$ on an open set $U$ is {harmonic}
 if it is $C^2$ on $U$ and $d \ast d f =0$. We say that a complex one-form $\alpha$ is harmonic if it is $C^1$ and satisfies
 both $d\alpha =0$ and $d \ast \alpha =0$. 
\end{definition}
  Equivalently, harmonic one-forms are those which can be expressed locally as $df$ for some harmonic function $f$. Harmonic one-forms and functions must of course be $C^\infty$. \\
  
   Denote complex conjugation of functions and forms with a bar, e.g. $\overline{\alpha}$.
 A holomorphic one-form is one which can be written in coordinates as $h(z)\,dz$ for a holomorphic function $h$, while an anti-holomorphic one-form is one which can be locally written $\overline{h(z)}\, d\bar{z}$ for a holomorphic function $h$.  
 
Denote by $L^2(U)$ the set of one-forms $\omega$ on an open set $U$ in a Riemann surface which satisfy
\[   \iint_U \omega \wedge \ast \overline{\omega} < \infty  \]
(observe that the integrand is positive at every point, as can be seen by writing the expression in local coordinates).  In this paper $U$ will be a compact Riemann surface without boundary or a Riemann surface with border. For $\alpha, \beta \in L^2(U)$ the pairing 
\begin{equation} \label{eq:form_inner_product}
 (\omega_1,\omega_2) =  \iint_U \omega_1 \wedge \ast \overline{\omega_2}
\end{equation}
is defined.

\begin{definition}\label{defn:bergman spaces}
The Bergman space of holomorphic one forms is 
\begin{equation}
    \mathcal{A}(U) = \{ \alpha \in L^2(U) \,:\, \alpha \ \text{holomorphic} \}.
\end{equation} 
 The Bergman space of anti-holomorphic one-forms is denoted $\overline{\mathcal{A}(U)}$.   We will also denote 
\begin{equation}
    \mathcal{A}_{\mathrm{harm}}(U) =\{ \alpha \in L^2(U) \,:\, \alpha \ \text{harmonic} \}
\end{equation}
and call this the harmonic Bergman space.
\end{definition}
For all choices of $U$ in this paper, the Bergman spaces are Hilbert spaces with respect to the inner product (\ref{eq:form_inner_product}).

Observe that $\mathcal{A}(U)$ and $\overline{\mathcal{A}(U)}$ are orthogonal with respect to the inner product \eqref{eq:form_inner_product}.  In fact we have the direct sum decomposition
\begin{equation}\label{direct sum decomposition}
 \mathcal{A}_{\mathrm{harm}}(U) = \mathcal{A}(U) \oplus \overline{\mathcal{A}(U)}.    
\end{equation}      
If we restrict the inner product to 
 $\alpha, \beta \in \mathcal{A}(U)$ then since $\ast \overline{\beta} = i \overline{\beta}$, we have   
\[  (\alpha,\beta) = i \iint_U \alpha \wedge \overline{\beta}.      \]

Let $f: U \rightarrow V$ be a biholomorphism. We denote the pull-back of $\alpha \in \mathcal{A}_{\mathrm{harm}}(V)$
under $f$ by $f^*\alpha.$ 
Explicitly, if $\alpha$ is given in local coordinates $w$ by $a(w)\, dw + \overline{b(w)} \, d\bar{w}$ and $w=f(z),$ then the pull-back is given by 
\[   f^* \left( a(w)\, dw + \overline{b(w)} \,d\bar{w} \right)= a(f(z)) f'(z)\, dz + \overline{b(f(z))} \overline{f'(z)}\, d\bar{z}.   \]

The Bergman spaces are all conformally invariant, in the sense that if $f:U \rightarrow V$ is a biholomorphism, then $f^*\mathcal{A}(V) = \mathcal{A}(U)$ and this preserves the inner product 
(i.e. $f^*$ is unitary operator).
Similar statements hold for the anti-holomorphic and full 
harmonic spaces. \\

\begin{definition}\label{def: exact holo and harm forms}
 We define the space $\mathcal{A}^e_{\mathrm{harm}}(U)$ as the subspace of exact elements of $\mathcal{A}_{\mathrm{harm}}(U)$, and similarly for $\mathcal{A}^\mathrm{e}(\riem)$ and $\overline{\mathcal{A}^\mathrm{e}(\riem)}$.  
\end{definition}

We also consider one-forms which have zero boundary periods,
which we call semi-exact.  
\begin{definition} \label{de:semi_exact}
 Let $\riem$ be a Riemann surface with $n$ borders homeomorphic to the circle. 
 We say that an $L^2$ one-form $\alpha \in \mathcal{A}_{\mathrm{harm}}(\riem)$ is semi-exact if for any simple closed curve $\gamma$ homotopic to a boundary curve $\partial_k \riem$, 
 \[ \int_{\gamma}  \alpha =0. \]
 The class of semi-exact holomorphic one-forms on $\riem$ is denoted $\mathcal{A}^{\mathrm{se}}(\riem)$.
\end{definition}

 For compact surfaces $\mathscr{R}$, one defines {Green's function} $\mathscr{G}$ as the unique function
 $\mathscr{G}(w,w_0;z,q)$ satisfying, for $z,q$ distinct,
 \begin{enumerate}
  \item $\mathscr{G}$ is harmonic in $w$ on $\mathscr{R} \backslash \{z,q\}$;
  \item for a local coordinate $\phi$ on an open set $U$ containing $z$, $\mathscr{G}(w,w_0;z,q) + \log| \phi(w) -\phi(z) |$ is harmonic 
   for $w \in U$;
  \item for a local coordinate $\phi$ on an open set $U$ containing $q$, $\mathscr{G}(w,w_0;z,q) - \log| \phi(w) -\phi(q) |$ is harmonic 
   for $w \in U$;
  \item $\mathscr{G}(w_0,w_0;z,q)=0$  
  for all $w_0 \neq q,z$.
 \end{enumerate}
\end{definition}

The existence of such a function is a standard fact about Riemann surfaces, see for example H. Royden \cite{Royden}. The term ``Green's function'', which is not in universal use, is taken from \cite{Royden}.
 It follows from the properties that $\mathscr{G}$ is also harmonic in $z$ anywhere that it is non-singular. We will usually leave out the point ``$w_0$'' in the notation for $\mathscr{G}$ because it only changes an additive constant that vanishes after differentiation.
\end{subsection}
\begin{subsection}{Capped surfaces and Schiffer operators} 
 In this paper, by a conformal map we mean a holomorphic bijection.  If we say that $f$ is a conformal map of a domain $D$ without specifying the image, we mean that it is a holomorphic bijection onto its image; that is, it is one-to-one on $D.$

 Given a Riemann surface $R$, we say that a simple closed curve $\Gamma$ in $R$ is a quasicircle if there is a conformal map $\phi:U \rightarrow \mathbb{C}$ from an open set $U$ containing $\Gamma$ such that $\phi(\Gamma)$ is a quasicircle in the plane, 
 in the usual sense.

 By a capped surface $\mathscr{R},\Omega_1,\ldots,\Omega_n$ we mean the following data: 
 \begin{enumerate}
     \item a compact surface $\mathscr{R}$ without boundary;
     \item a finite collection of simply-connected domains $\Omega_1,\ldots,\Omega_n$ in $\mathscr{R}$;  such that
     \begin{enumerate}
     \item the boundary of $\Omega_k$ is a Jordan curve for $k=1,\ldots,n$; and 
     \item $\text{cl} \,\Omega_k \cap \text{cl} \,\Omega_l$ is empty for all $k\neq l$. 
     \end{enumerate}
 \end{enumerate}
 The domains $\Omega_k$ are referred to as ``caps''. 
 By a capped surface with conformal maps $f_1,\ldots,f_n$ we mean a capped surface as above, together with conformal maps
 \[  f_k:\mathbb{D} \rightarrow \Omega_k. \]
 The maps $f_k$ specify distinguished points $p_k=f_k(0)$ for $k=1,\ldots,n$. 
 
 Throughout the paper we will denote 
 \[  \Omega = \cup_{k=1}^n \Omega_k   \]
 and
 \[  \riem = \mathscr{R} \backslash \mathrm{cl} \,  \Omega  \]
 where $\mathrm{cl}$ denotes topological closure, 
 and the conformal maps as 
 \[  f=(f_1,\ldots,f_n). \]
 For brevity we refer to the data for a capped surface as $(\mathscr{R},\Omega)$ and the data of a capped surface with conformal maps as $(\mathscr{R},\Omega,f)$.  
 
We define the Schiffer operator as follows:  
\begin{align*}
  \mathbf{T}:\overline{\mathcal{A}(\Omega)} & \rightarrow \mathcal{A}(\riem) \\
  \overline{\alpha} & \mapsto \frac{1}{\pi i} \iint_{\Omega} \partial_w \partial_z \mathscr{G}(w;z,q) \wedge_w \overline{\alpha(w)}.  
\end{align*} 
Here $z \in \riem$.  
This operator is bounded and independent of $q$ \cite{Schippers_Shirazi_Staubach,Schippers_Staubach_scattering}.
\end{subsection} 
\end{section}
\begin{section}{Faber series for forms}

\begin{subsection}{Faber operator for capped surfaces}
 
 Let $\mathscr{R}$ be a compact surface. Fix distinct points $z_1,\ldots,z_n \in \mathscr{R}$ and denote $\mathscr{R}'= \mathscr{R} \backslash \{z_1,\ldots,z_n\}$.
   Give $\mathscr{R}$ a homotopy basis of curves
 \[  a_1,\ldots,a_g, b_1,\ldots,b_g \]
 where $g$ is the genus of $\mathscr{R}$.  If desired, it can be chosen so that the intersection matrix
 is of the form 
 \begin{equation} \label{eq:standard_intersection}
 a_j \cdot b_k = \delta_{jk} 
 \end{equation}
 where $\delta_{jk}$ is the Kronecker delta function \cite{Farkas_Kra}.

 Every cohomology class on $\mathscr{R}$ has a harmonic representative.  That is, for any choice of periods there is a harmonic one-form $\alpha + \overline{\beta} \in \mathcal{A}(\mathscr{R}) \oplus \overline{\mathcal{A}(\mathscr{R})}$ with those periods. That is, given $\lambda_1,\ldots,\lambda_g,\mu_1,\ldots,\mu_g$ there is an  $\alpha + \overline{\beta}$ such that 
 \[ \int_{a_j} (\alpha + \overline{\beta}) = \lambda_j, \ \ \ \mathrm{and} \ \ \ \int_{b_k} (\alpha + \overline{\beta}) = \mu_k. \]
 For the rest of this paper, we fix a basis for the space of holomorphic one-forms
 \[ \mathcal{A}(\mathscr{R}) = \mathrm{span} \{ \gamma_1,\ldots, \gamma_g \}. \]
 We then have a corresponding basis for the anti-holomorphic one-forms
  \[ \overline{\mathcal{A}(\mathscr{R})} = \mathrm{span} \{ \overline{\gamma_1},\ldots, \overline{\gamma_g} \}. \]  

 \begin{remark} \label{re:normalized_a_periods} A particular basis for $\mathcal{A}_{\mathrm{harm}}(\mathscr{R})$ is the following. If the homology basis satisfies the intersection property \eqref{eq:standard_intersection} then it is well known that there are unique one-forms $\gamma_k \in \mathcal{A}(\mathscr{R})$ such that 
\[
\int_{a_j} \gamma_k = \delta_{jk}. 
\] 
The period matrix is given by 
\[ \Pi_{jk}=  \int_{b_j} \gamma_k.  \]
which is symmetric and has positive-definite imaginary part.
\end{remark}

 For $k=1,\ldots,n-1,$ we let $\beta_k$
 be the unique meromorphic one-form on $\mathscr{R}$ with a simple pole of residue $1$ at $z_k$, a simple pole of residue $-1$ at $z_n$, and no other poles.  Let $\Gamma_1,\ldots,\Gamma_n$ be a collection of simple closed curves with the property that $\Gamma_k$ has winding number $1$ with respect to 
 $z_k$
 and $0$ with respect to the other points. If desired, we may choose them to be non-intersecting curves. For example, given a capped surface $\mathscr{R}$ the curves $\partial \Omega_k$, oriented positively with respect to $\Omega_k$ form such a collection of curves.
Given any meromorphic one-form $\alpha$ on $\mathscr{R}$ which is holomorphic on $\mathscr{R} \backslash \{ z_1,\ldots,z_n \}$
there is a unique $\beta \in \mathrm{span} \{ \beta_1,\ldots,\beta_{n-1} \}$ such that 
 \[  
 \int_{\Gamma_k} \alpha - \beta = 0 \ \ \mathrm{for} \ \ k=1,\ldots,n.  
 \]
 
 Define the space 
 \[ 
 \overline{X}(\riem) = \left\{ \alpha \in \mathcal{A}(\riem) : \exists \,  \overline{\sigma} \in \overline{\mathcal{A}(\mathscr{R})} \ \mathrm{ s.t. } \ \alpha - \overline{\sigma} \in \mathcal{A}^e(\riem)
 \right\}.      
 \]
The following was proven in \cite{Schippers_Staubach_scattering}. 
\begin{theorem} \label{co:Tonetwo_iso_full}  Let $(\mathscr{R},\Omega)$ be a capped surface, and $\riem = \mathscr{R} \backslash \mathrm{cl} \, \Omega$.   If the caps are quasidisks, then 
$\mathbf{T}$ is an isomorphism onto $\overline{X}(\riem)$.
 \end{theorem}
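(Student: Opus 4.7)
My plan is to prove the theorem in three stages: (i) show that $\mathbf{T}\overline{\mathcal{A}(\Omega)} \subseteq \overline{X}(\riem)$; (ii) prove that $\mathbf{T}$ is injective; and (iii) prove that $\mathbf{T}$ is surjective onto $\overline{X}(\riem)$. The quasidisk hypothesis will enter essentially only at stage (iii).

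For step (i), I would exploit the fact that the integration kernel $\partial_w \partial_z \mathscr{G}(w;z,q)$ is the $z$-differential of the Cauchy--Royden kernel $\partial_w \mathscr{G}(w;z,q)$. Integrating this unaltered kernel against $\overline{\alpha}$ produces a (possibly multi-valued) function $F(z) = \frac{1}{\pi i}\iint_\Omega \partial_w \mathscr{G}(w;z,q)\wedge_w \overline{\alpha(w)}$ on $\riem$, which is harmonic in $z$ on $\riem$ away from $q$. Decomposing $dF$ into its $(1,0)$ and $(0,1)$ parts gives $\mathbf{T}\overline{\alpha}$ as the holomorphic half, while a period analysis of the anti-holomorphic half, using the defining properties of Green's function on the compact surface $\mathscr{R}$, identifies it with (the restriction of) some $\overline{\sigma} \in \overline{\mathcal{A}(\mathscr{R})}$. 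The outcome is a relation of the form $\mathbf{T}\overline{\alpha} - \overline{\sigma} = dh$ on $\riem$, which is precisely the defining property of $\overline{X}(\riem)$.

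For step (ii), I would introduce a companion Schiffer operator $\mathbf{T}_\Omega : \overline{\mathcal{A}(\Omega)} \to \mathcal{A}(\Omega)$ given by the same integral kernel with $z \in \Omega$, treating the diagonal singularity in the principal-value sense. Applying Stokes' theorem across $\partial \Omega$ together with the reproducing property of the Schiffer kernel should yield a Pythagorean-type identity of the form $\| \overline{\alpha} \|_{\Omega}^2 = \| \mathbf{T}\overline{\alpha} \|_{\riem}^2 + (\text{non-negative term involving } \mathbf{T}_\Omega\overline{\alpha})$, so that $\mathbf{T}\overline{\alpha} = 0$ forces $\overline{\alpha} = 0$.

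Stage (iii) is the main obstacle, and the only step where quasidisks are essential. Given $\alpha \in \overline{X}(\riem)$, decompose $\alpha = \overline{\sigma} + dh$ with $\overline{\sigma} \in \overline{\mathcal{A}(\mathscr{R})}$ and $h$ of Dirichlet class on $\riem$. The pullback of the boundary trace of $h$ under each $f_k$ lies in the quasisymmetrically invariant half-order Sobolev space on $S^1$. On each quasidisk $\Omega_k$, the planar isomorphism of Napalkov--Yulmukhametov \cite{Nap_Yulm} --- equivalently the Faber isomorphism of Shen \cite{ShenFaber} --- produces a Dirichlet extension $H_k$ whose $\overline{\partial}$-derivative lies in $\overline{\mathcal{A}(\Omega_k)}$. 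Setting $\overline{\beta} := \overline{\partial} H$ on $\Omega$ gives the candidate pre-image; comparing the decomposition of $\mathbf{T}\overline{\beta}$ produced in step (i) with the given decomposition of $\alpha$, and invoking uniqueness of the Hodge splitting $\mathcal{A}_{\mathrm{harm}}(\mathscr{R}) = \mathcal{A}(\mathscr{R}) \oplus \overline{\mathcal{A}(\mathscr{R})}$, forces $\mathbf{T}\overline{\beta} = \alpha$. The quasidisk hypothesis is indispensable here: by Shen's converse \cite{ShenFaber}, the boundary extension step fails for non-quasidisk caps, so this is where the geometric regularity of each $\partial \Omega_k$ is genuinely used.
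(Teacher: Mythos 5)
You should first be aware that the paper does not prove this theorem at all: it is imported verbatim from the reference \cite{Schippers_Staubach_scattering} (``The following was proven in...''), so there is no internal argument to compare against. Judged on its own terms, your outline has the right overall shape --- characterize the image via the Cauchy--Royden potential, then injectivity, then surjectivity by transmitting boundary data across the quasicircles --- but two of the three stages contain genuine gaps.

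In stage (ii), the norm identity you invoke, $\|\overline{\alpha}\|_{\Omega}^2 = \|\mathbf{T}\overline{\alpha}\|_{\riem}^2 + \|\mathbf{T}_{\Omega}\overline{\alpha}\|_{\Omega}^2$, is the correct one (in the plane it is the isometry of the Beurling transform applied to $\overline{\alpha}$ extended by zero), but it points the wrong way for your purposes: it exhibits $\mathbf{T}$ as a contraction and is perfectly consistent with $\mathbf{T}\overline{\alpha}=0$ and $\overline{\alpha}\neq 0$, with all of the norm sitting in the $\mathbf{T}_{\Omega}$ term. Injectivity needs a separate argument, e.g.\ that when $\mathbf{T}\overline{\alpha}$ vanishes the potential $F$ from your stage (i) is constant on $\riem$, so its restriction to $\Omega$ is a Dirichlet-finite function with constant boundary trace whose $\partial$ and $\bar{\partial}$ parts are respectively anti-holomorphic and holomorphic, and this forces $\overline{\alpha}=0$. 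In stage (iii) there are two problems. First, the harmonic Dirichlet-bounded extension of the boundary trace of $h$ into the caps is not what Napalkov--Yulmukhametov provides; it is the boundedness of the overfare (transmission) operator $\mathbf{O}^e$ across a quasicircle, described in the Remark following Theorem \ref{th:big_isomorpism}, which rests on quasisymmetric invariance of the $H^{1/2}$ seminorm rather than on the planar Hilbert-transform isomorphism. Second, and more seriously, your final deduction --- that comparing the two decompositions and ``invoking uniqueness of the Hodge splitting'' forces $\mathbf{T}\overline{\beta}=\alpha$ --- does not follow: the Hodge decomposition of $\mathcal{A}_{\mathrm{harm}}(\mathscr{R})$ in no way prevents $\mathbf{T}\overline{\beta}$ and $\alpha$ from sharing the same correction $\overline{\sigma}\in\overline{\mathcal{A}(\mathscr{R})}$ while having different exact parts. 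Closing this requires the Plemelj--Sokhotski jump formula on the surface, which matches the boundary values of the primitive of $\mathbf{T}\overline{\beta}-\overline{\sigma}$ with those of $h$; that identity is the technical core of the cited proof and is exactly what underlies the explicit inverse $\Theta^{-1}=(-\overline{\mathbf{P}}_{\Omega})\oplus\mathbf{P}_{\mathscr{R}}$ recorded in the paper.
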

 
Let 
   \begin{align*}
       \mathbf{R}_{\riem}:\mathcal{A}(\mathscr{R}) & \rightarrow \mathcal{A}(\riem) \\
       \mathbf{R}_{\Omega}:\mathcal{A}(\mathscr{R}) & \rightarrow \mathcal{A}(\Omega)
   \end{align*}
   be the restriction operators.
   Similarly $\overline{\mathbf{R}_{\riem}}$, $\overline{\mathbf{R}_{\Omega}}$ are the restriction operators on $\overline{\mathcal{A}(\mathscr{R})}$ while $\mathbf{R}^h_{\Omega}$, $\mathbf{R}^h_{\riem}$ denote the restriction operators on $\mathcal{A}_{\mathrm{harm}}(\mathscr{R})$. 
   
   The following generalization of the Faber operator plays a central role in the approximation theory. 
   Define
   \begin{align*}
       \Theta: \overline{\mathcal{A}(\Omega)} \oplus \mathcal{A}(\mathscr{R}) & \rightarrow \mathcal{A}^{se}(\riem)  \\
       (\overline{\gamma},\tau) & \mapsto \mathbf{T} \overline{\gamma} +  {\mathbf{R}_\riem} \tau.  
   \end{align*}
    
   We have the following result.
   \begin{theorem}[\cite{Schippers_Staubach_scattering}] \label{th:big_isomorpism}  Let $(\mathscr{R},\Omega)$ be a capped surface, and $\riem = \mathscr{R} \backslash \mathrm{cl} \, \Omega$.   If the caps are quasidisks, then $\Theta$ is a bounded isomorphism. 
   \end{theorem}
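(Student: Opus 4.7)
The plan is to prove that $\Theta$ is a bounded bijection; boundedness of the inverse then follows from the open mapping theorem. Boundedness of $\Theta$ itself is immediate from the boundedness of $\mathbf{T}$ (Theorem \ref{co:Tonetwo_iso_full}) together with the triviality of the restriction operator. I first verify that the image lies in $\mathcal{A}^{\mathrm{se}}(\riem)$. For $\tau\in\mathcal{A}(\mathscr{R})$ and a simple closed curve $\gamma$ homotopic to $\partial_k\riem$, the curves $\gamma$ and $\partial\Omega_k$ cobound an annulus in $\mathrm{cl}(\Omega_k)$ where the holomorphic form $\tau$ is closed, so $\int_\gamma\tau = \int_{\partial\Omega_k}\tau = 0$ by Stokes. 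For $\mathbf{T}\overline{\gamma}\in\overline{X}(\riem)$, Theorem \ref{co:Tonetwo_iso_full} provides $\overline{\sigma}\in\overline{\mathcal{A}(\mathscr{R})}$ such that $\mathbf{T}\overline{\gamma} - \overline{\sigma}|_\riem$ is exact on $\riem$, and the same Stokes argument applied to the closed anti-holomorphic form $\overline{\sigma}$ yields $\int_\gamma\mathbf{T}\overline{\gamma} = 0$.

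For injectivity, suppose $\mathbf{T}\overline{\gamma} + \mathbf{R}_\riem\tau = 0$. Then $\mathbf{R}_\riem\tau = -\mathbf{T}\overline{\gamma}\in\overline{X}(\riem)$, so some $\overline{\sigma}\in\overline{\mathcal{A}(\mathscr{R})}$ makes $(\tau - \overline{\sigma})|_\riem$ exact on $\riem$. I would choose the homology basis $a_1,\ldots,a_g,b_1,\ldots,b_g$ to consist of curves in $\riem$; this is possible because each cap is a topological disk and excising disks does not affect $H_1(\mathscr{R})$. The vanishing of all $a_j, b_k$ periods of $\tau - \overline{\sigma}$ then makes this harmonic form on the closed surface $\mathscr{R}$ exact, hence zero. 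The orthogonal decomposition $\mathcal{A}_{\mathrm{harm}}(\mathscr{R}) = \mathcal{A}(\mathscr{R})\oplus\overline{\mathcal{A}(\mathscr{R})}$ forces $\tau = \overline{\sigma} = 0$, and $\overline{\gamma} = 0$ follows from the injectivity half of Theorem \ref{co:Tonetwo_iso_full}.

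For surjectivity, given $\alpha\in\mathcal{A}^{\mathrm{se}}(\riem)$ I seek $\tau\in\mathcal{A}(\mathscr{R})$ and $\overline{\sigma}\in\overline{\mathcal{A}(\mathscr{R})}$ such that $\alpha - \mathbf{R}_\riem\tau - \overline{\sigma}$ is exact on $\riem$; once produced, $\alpha - \mathbf{R}_\riem\tau$ lies in $\overline{X}(\riem)$ and Theorem \ref{co:Tonetwo_iso_full} supplies the required $\overline{\gamma}$. Since $H_1(\riem)$ is generated by the $a_j, b_k$ lifted into $\riem$ together with the boundary cycles, and semi-exactness of each term already kills the boundary periods, exactness reduces to matching the $2g$ periods $\int_{a_j}\alpha$ and $\int_{b_j}\alpha$. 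Expanding $\tau = \sum_k c_k\gamma_k$ and $\overline{\sigma} = \sum_k d_k\overline{\gamma_k}$ in the normalized basis of Remark \ref{re:normalized_a_periods}, the matching conditions form the $2g \times 2g$ complex linear system $\int_{a_j}\alpha = c_j + d_j$ and $\int_{b_j}\alpha = \sum_k\Pi_{jk}c_k + \sum_k\overline{\Pi_{jk}}d_k$. Row reduction shows this is uniquely solvable provided $\overline{\Pi} - \Pi = -2i\,\mathrm{Im}\,\Pi$ is invertible, which holds by positive-definiteness of $\mathrm{Im}\,\Pi$.

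The main delicate point is the justification for placing the homology basis of $\mathscr{R}$ inside $\riem$, which is where the topological simplicity of the caps (as disks with quasicircle boundaries) enters essentially. Beyond this and the invocation of the previously established isomorphism $\mathbf{T}:\overline{\mathcal{A}(\Omega)}\to\overline{X}(\riem)$, the argument is period bookkeeping combined with the standard fact that $\mathrm{Im}\,\Pi$ is positive definite.
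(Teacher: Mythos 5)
This theorem is quoted in the paper from the reference \cite{Schippers_Staubach_scattering} and no proof is given in the paper itself, so there is no internal argument to compare against; I can only assess your derivation on its own terms. Granting Theorem \ref{co:Tonetwo_iso_full} (which is likewise imported and carries the real analytic weight), your reduction is sound: the semi-exactness of the image follows from the Stokes argument on the caps; injectivity follows because cycles in $\riem$ generate $H_1(\mathscr{R})$, so a harmonic form on $\mathscr{R}$ that is exact on $\riem$ is exact on $\mathscr{R}$ and hence zero, after which the decomposition $\mathcal{A}_{\mathrm{harm}}(\mathscr{R}) = \mathcal{A}(\mathscr{R})\oplus\overline{\mathcal{A}(\mathscr{R})}$ and the injectivity of $\mathbf{T}$ finish the job; and surjectivity is exactly the period-matching the paper itself rehearses in the remark following Proposition \ref{pr:remove_some_cohomology}, with unique solvability coming from the positive definiteness of $\mathrm{Im}\,\Pi$ (equivalently, from the paper's stated fact that every cohomology class on $\mathscr{R}$ has a unique harmonic representative). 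Two small points deserve attention. First, you attribute the boundedness of $\mathbf{T}$ to Theorem \ref{co:Tonetwo_iso_full}; it is actually asserted in the preliminaries where $\mathbf{T}$ is defined. Second, your appeal to the open mapping theorem tacitly requires $\mathcal{A}^{\mathrm{se}}(\riem)$ to be complete; this holds because periods over fixed compact cycles are continuous functionals on $\mathcal{A}(\riem)$ (via the local sup-norm bound by the $L^2$ norm for holomorphic forms), so $\mathcal{A}^{\mathrm{se}}(\riem)$ is a closed subspace of the Hilbert space $\mathcal{A}(\riem)$ --- worth a sentence, since it is the only place completeness of the target is used. With those additions your argument is a correct and self-contained deduction of Theorem \ref{th:big_isomorpism} from Theorem \ref{co:Tonetwo_iso_full}.
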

\begin{remark} Theorems \ref{co:Tonetwo_iso_full} and \ref{th:big_isomorpism} can be seen as generalizations of the theorem of Napalkov and Yulmukhametov \cite{Nap_Yulm} mentioned in the introduction, in one direction. Their theorem says that in the case that $\mathscr{R} = \sphere$, $\mathbf{T}$ is an isomorphism if and only if $\Omega$ is a quasidisk.  These theorems take into account the cohomology of the surface in two different ways.
\end{remark}
  
  \begin{remark}
   The inverse of this isomorphism can be given explicitly \cite{Schippers_Staubach_scattering}.  For an exact harmonic form $\alpha$ on $\riem$, we let $\mathbf{O}^e \alpha \in \mathcal{A}_{\mathrm{harm}}(\Omega)$ be the unique harmonic one-form whose primitive  has the same boundary values as the primitive of $\alpha$ on $\Gamma$, up to a constant. What is meant by ``the same boundary values'' requires substantial clarification. It can be shown that this procedure is well-defined for quasicircles - for details see \cite{Schippers_Staubach_Plemelj,Schippers_Staubach_scattering}. Taking this for granted, we can give a simple formula for the inverse.   
   Then, we define  
    \begin{align*}
       {\mathbf{O}}': \mathcal{A}^{se}_{\mathrm{harm}}(\riem) & \rightarrow \mathcal{A}_{\mathrm{harm}}(\Omega)  \\
      \beta & \mapsto \mathbf{O}^e(\beta -  {\mathbf{R}}_\riem^h \sigma) +
        {\mathbf{R}}_\Omega^h\sigma  
    \end{align*}
    where $\sigma$ is the unique element of 
    $\mathcal{A}_{\mathrm{harm}}(\mathscr{R})$ such that $\beta-  {\mathbf{R}}_\riem^h \sigma$ is exact.  
 Using this, we define the augmented overfare map
 \begin{align}   \label{eq:Oaugmented_definition}  
   {\mathbf{O}}^{aug}: \mathcal{A}^{se}_{\mathrm{harm}}(\riem) & \rightarrow \mathcal{A}_{\mathrm{harm}}(\Omega) \oplus \mathcal{A}_{\mathrm{harm}}(\mathscr{R})  \nonumber \\
   \beta   & \mapsto (  {\mathbf{O}}' \beta, \sigma ).
 \end{align} 
   Let 
   $\mathbf{P}_{\mathscr{R}}$ be the bounded orthogonal projection  
   \[   {\mathbf{P}}_{\mathscr{R}}: \mathcal{A}_{\mathrm{harm}}(\mathscr{R}) \rightarrow \mathcal{A}(\mathscr{R})   \] 
   and similarly ${\overline{\mathbf{P}}}_{\mathscr{R}}$ is the projection onto the anti-holomorphic part of $\mathcal{A}_{\mathrm{harm}}(\Omega)$. 
 
  With these definitions, 
  \[  \Theta^{-1} = \left(-\overline{\mathbf{P}}_{\Omega}\right) \oplus  {\mathbf{P}}_{\mathscr{R}} \]
  (Note that there is a change in sign convention from \cite{Schippers_Staubach_scattering}).
  \end{remark}
\end{subsection}
\end{section}
\begin{section}{Faber-Tietz series}
\begin{subsection}{Faber-Tietz forms} 
 
 Let $\mathscr{R},\Omega_1,\ldots,\Omega_n$ be a capped surface with conformal maps $f_1,\ldots,f_n,$ 
 \[
 f_k:\mathbb{D} \rightarrow \Omega_k. 
 \]
 Denote $\Omega = \Omega_1 \cup \cdots \cup \Omega_n$. Assume that $z_k=f_k(0)$ for $k=1,\ldots,n$.

 We define the promised Faber-Tietz forms. These are meromorphic differentials with possible singularities only  at points $z_1,\ldots,z_n$. The differentials depend on $f_1,\ldots,f_n$. 
 
 Let $e^m_k \in \overline{\mathcal{A}(\disk)} \oplus  \cdots \oplus \overline{\mathcal{A}(\disk)}$ (where there are $n$ copies of $\overline{\mathcal{A}(\disk)}$ on the right hand side) be the one-form
 \[  \left( 0,\ldots, 0, m\, \overline{z}^{m-1} d\, \bar{z}, 0, \ldots,0 \right) \]
 where the non-zero entry is in the $k$ component of the direct sum. 
 \begin{definition}[Faber-Tietz forms]  Let $m \geq 1$ be an integer and $k \in \{1,\ldots,n \}$. The $m,k$-th Faber-Tietz form with singularity at $z_k$ is 
 \[  \alpha^m_k = \mathbf{T} (f^{-1})^* (e^m_k).  \]
 \end{definition}
 Here, the pull-back $(f^{-1})^* e$ of $e \in \overline{\mathcal{A}(\disk)} \oplus  \cdots \oplus \overline{\mathcal{A}(\disk)}$ refers to the one-form in $\overline{\mathcal{A}(\Omega)}$ whose restriction to $\Omega_k$ is $(f_k^{-1})^* e$.  By properties of $\mathbf{T}$ (see Theorem \ref{co:Tonetwo_iso_full}), $\alpha^m_k$ is in $\mathcal{A}(\riem)$.  More can be said: it extends to a meromorphic differential on $\mathscr{R}$ with poles only at the points $z_1,\ldots,z_n$. We show this now. 

 \begin{theorem} \label{th:Faber-Tietz_have_specified_pole} For any integer $m \geq 1$ and $k \in \{1,\ldots,n \}$, 
  the Faber-Tietz form $\alpha_k^m$ extends holomorphically to $\mathscr{R} \backslash \{z_k\}$.  Furthermore, $\alpha^m_k$ has a pole of order $m+1$ at $z_k$; in coordinates defined by $f_k$, the principal part at $z_k$ is given by 
  \[  f^* \alpha^m_k = 
  \left(\frac{m}{\zeta^{m+1}} + h(\zeta )\right) d\zeta \]
  where $h(\zeta)$ is holomorphic at $0$.  
 \end{theorem}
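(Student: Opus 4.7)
The plan is to reduce the defining double integral to a residue at the origin of $\disk$ via Stokes' theorem, then read off both the meromorphic extension and the principal part. The key observation is that $e_k^m = d(\bar\xi^m)$ is exact on $\disk$, so after pulling back to the disk the integrand is an exact form wedged with the $w$-holomorphic Schiffer kernel, and Stokes applies on $\disk$ --- whose boundary $S^1$ is smooth, sidestepping any subtleties about the quasicircle $\partial\Omega_k$.

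First I would change variables $w = f_k(\xi)$ to write $\alpha_k^m(z) = (\pi i)^{-1}\iint_\disk \Lambda(\xi,z)\,d\xi \wedge d(\bar\xi^m)$, where $\Lambda(\xi,z)\,d\xi\cdot dz := f_k^*[\partial_w\partial_z\mathscr{G}(\cdot;z,q)]$ is the $w$-variable pullback of the Schiffer kernel. For $z\in\riem$, $\Lambda(\cdot,z)\,d\xi$ is holomorphic in $\xi$ on a neighborhood of $\cdisk$, so the integrand equals $-d(\bar\xi^m\Lambda\,d\xi)$. Stokes together with $\bar\xi = \xi^{-1}$ on $S^1$ and the residue theorem gives
\[
\alpha_k^m(z) = -\frac{1}{\pi i}\int_{S^1}\xi^{-m}\Lambda(\xi,z)\,d\xi \cdot dz = -2\, C_m(z)\,dz,
\]
where $C_m(z)$ is the coefficient of $\xi^{m-1}$ in the Taylor expansion of $\Lambda(\cdot,z)$ at $\xi = 0$.

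For the meromorphic extension, write $C_m(z) = (2\pi i)^{-1}\oint_{|\xi|=\varepsilon}\Lambda(\xi,z)\xi^{-m}\,d\xi$ for small $\varepsilon>0$. Because $\partial_w\partial_z\mathscr{G}(w;z,q)$ is holomorphic in $z$ on $\mathscr{R}\setminus\{w\}$, the integrand is holomorphic in $z$ on $\mathscr{R}\setminus f_k(\{|\xi|\leq\varepsilon\})$; letting $\varepsilon\searrow 0$ exhibits $C_m$ as holomorphic on $\mathscr{R}\setminus\{z_k\}$. Together with $\alpha_k^m = -2 C_m\,dz$, this yields the desired holomorphic extension of $\alpha_k^m$ to $\mathscr{R}\setminus\{z_k\}$.

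For the principal part I would pull back in both variables. Using the universal local form $\partial_w\partial_z\mathscr{G}(w;z,q) = -\frac{dw\,dz}{2(w-z)^2} + (\text{holomorphic bidifferential})$, a direct Taylor computation shows that the $f_k'$-prefactors absorb the leading distortion of $f_k(\xi)-f_k(\zeta)$, yielding $\mathcal L(f_k(\xi),f_k(\zeta))f_k'(\xi)f_k'(\zeta) = -\frac{1}{2(\xi-\zeta)^2} + \widetilde R(\xi,\zeta)$ with $\widetilde R$ holomorphic on $\disk\times\disk$. The coefficient of $\xi^{m-1}$ in $-1/(2(\xi-\zeta)^2)$ is $-m/(2\zeta^{m+1})$, so $C_m(f_k(\zeta)) = f_k'(\zeta)^{-1}[-m/(2\zeta^{m+1}) + h_1(\zeta)]$ with $h_1$ holomorphic at $0$. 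Since $dz = f_k'(\zeta)\,d\zeta$, the $f_k'(\zeta)$ factors cancel and we obtain $f_k^*\alpha_k^m = (m/\zeta^{m+1} + h(\zeta))\,d\zeta$ with $h = -2h_1$ holomorphic, as claimed. The main technical input is the universal form of the diagonal singularity of $\partial_w\partial_z\mathscr{G}$ and its preservation (up to holomorphic correction) under biholomorphic pullback; granting this, the rest is residue calculus on $\disk$, and the only delicate point is tracking signs and verifying the exact cancellation of the $f_k'$ factors.
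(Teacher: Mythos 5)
Your overall strategy --- convert the area integral to a contour integral, identify the result as a residue at the origin of $\disk$, shrink the contour to obtain the holomorphic extension, and use the local form of the kernel for the principal part --- is the same as the paper's, and your sign and coefficient bookkeeping checks out. But there is a genuine gap at the very first step, located exactly where you claim to have sidestepped the difficulty. You assert that $\Lambda(\cdot,z)\,d\xi$ is holomorphic on a neighbourhood of $\cdisk$, so that Stokes' theorem applies on the closed disk with smooth boundary $S^1$. This is false in general: the pulled-back kernel $\Lambda(\xi,z)$ contains a factor of $f_k'(\xi)$ coming from the pullback of $dw$, and for a quasidisk $\Omega_k$ the map $f_k$ need not extend holomorphically past $S^1$; indeed a quasicircle need not be rectifiable, equivalently $f_k'$ need not lie in the Hardy space $H^1$, so the boundary integral $\int_{S^1}\bar\xi^m\Lambda\,d\xi$ you write down may fail to converge absolutely. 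Pulling back to $\disk$ does not remove the boundary irregularity of $\partial\Omega_k$; it merely relocates it into the integrand. This is precisely the point at which the paper invokes the ``anchor lemma'' (Theorem 4.8 of \cite{Schippers_Staubach_Plemelj}): the area integral is identified with the limit as $r\nearrow 1$ of contour integrals over $f_k(\{|\xi|=r\})$, and only then is $\bar\xi^m$ replaced by $\xi^{-m}$ and the contour pushed inward using holomorphy of the integrand in $w$ on the annulus.

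The repair in your setup is the corresponding one: apply Stokes on $\{|\xi|\le r\}$ for $r<1$, where everything is smooth, and use $\bar\xi^m=r^{2m}\xi^{-m}$ on $|\xi|=r$ to get
\[
\iint_{|\xi|<r}\Lambda\,d\xi\wedge d(\bar\xi^m)=-r^{2m}\int_{|\xi|=r}\xi^{-m}\Lambda\,d\xi=-2\pi i\,r^{2m}C_m(z),
\]
then let $r\nearrow 1$, using that the integrand of the area integral is in $L^1(\disk)$ (a kernel bounded near $\mathrm{cl}\,\Omega_k$ for fixed $z\in\riem$, against an $L^2$ form on a set of finite measure). Once this is done, the rest of your argument --- the contour-shrinking argument for the extension to $\mathscr{R}\setminus\{z_k\}$, and the extraction of the principal part from the diagonal singularity $-\tfrac12(w-z)^{-2}\,dw\,dz$ of $\partial_w\partial_z\mathscr{G}$ read in the coordinate $f_k$ --- is correct and matches the paper's computation; the paper works instead with $\partial_w\mathscr{G}=\tfrac{1}{2\pi i}(\eta-\zeta)^{-1}\,d\eta+\psi$ and differentiates in $\zeta$ afterwards, which is the same residue calculation.
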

 \begin{proof}  Fix $k \in \{1,\ldots,n\}$ and $m \geq 1$. 
 For $0<r<1$ let $\Gamma^k_r = f_k(|z|=r)$ and note that this curve is analytic and homotopic to $\partial_k \Omega$. 
 For any annulus $\mathbb{A}_r = \{z: r<|z|<1 \}$  we set $A_r=f(\mathbb{A}_r)$. Fix $q \in \riem$ (the location is not important).  Fix an $r_0$ such that $0<r_0<1$.  
 
 
   Using \cite[Theorem 4.8]{Schippers_Staubach_Plemelj} in the third equality below, we compute using Stokes' theorem that 
  \begin{align} \label{eq:integral_expression_singularity_proof} 
      \alpha^m_k = \mathbf{T} (f^{-1})^* (e^m_k)(z) & = -  \frac{1}{\pi i} \iint_{{\Omega_k}} \partial_z \partial_w \mathscr{G}(w;z,q) \wedge_w (f^{-1})^*e^m_k \nonumber \\& = - \lim_{\epsilon \searrow 0} \frac{1}{\pi i} \int_{\Gamma^k_{r}} \partial_z \partial_w \mathscr{G}(w;z,q)   \overline{(f^{-1}(w))}^{m}  \nonumber \\
      & = - \lim_{\epsilon \searrow 0} \frac{1}{\pi i} \int_{\Gamma^k_r} \partial_z \partial_w \mathscr{G}(w;z,q) (f^{-1}(w))^{-m} \nonumber \\
      & =  - \frac{1}{\pi i} \int_{\Gamma^k_{r_0}}  \partial_z  \partial_w \mathscr{G}(w;z,q) (f^{-1}(w))^{-m}  \nonumber \\
      & = - \frac{1}{\pi i} \partial_z \int_{\Gamma^k_{r_0}}  \partial_w \mathscr{G}(w;z,q) (f^{-1}(w))^{-m}.
  \end{align}
  In the second-to-last equality, we have used the fact that the integrand is holomorphic in $w$ in $A_r$.  
  
  The far right hand side in the above expression is holomorphic in $z$ on the exterior of $\Gamma^k_{r_0}$. Since $r_0$ is arbitrary, we see that  $\alpha^m_k$
  in fact holomorphic on $\mathscr{R} \backslash \{z_k\}$. 
  
  To prove the second claim, we express the final integral in \eqref{eq:integral_expression_singularity_proof} in the coordinate induced by $f$. Let $\eta$, $\zeta$ be given by $w =f(\eta)$ and $z = f(\zeta)$.  Assuming that $|\zeta|>r=|\eta|$, using the defining properties of $\mathscr{G}$ with $f$ in place of the local coordinate $\phi$ near $z$ we see that 
  \begin{align*}
       f^* \alpha^m_k(z) & = - f^* \frac{1}{\pi i} \partial_{z} \int_{\Gamma^k_{r}} \partial_w \mathscr{G}(w;z,q) (f^{-1}(w))^{-m} \\
       & = \partial_{\zeta} \int_{|\eta|=r} \eta^{-m} \left(  \frac{1}{2 \pi i} \frac{1}{\eta - \zeta} + \psi(\eta;\zeta) \right) d\eta  \\
       & = \partial_{\zeta}\left( - \frac{1}{\zeta^m} + \int_{|\eta|=r} \eta^{-m} \psi(\eta;\zeta) d\eta \right) \\
       & = 
       \frac{m }{\zeta^{m+1}}d\zeta
       + \int_{|\eta|=r}  \eta^{-m} \partial_\zeta \psi(\eta;\zeta) d\eta 
  \end{align*}
  where $\psi(\eta;\zeta)$ is harmonic and non-singular in $\zeta$, and holomorphic in $\eta$. The integral over $|z|=r$ is assumed to be in the direction determined by the positive orientation with respect to $0$ (counter-clockwise in the plane).
  Since the second integrand is holomorphic in $\eta$, the second integral is independent of $r$. Letting $r \searrow 0$ we see that the above formula holds for $\zeta$ on a punctured neighbourhood of $0$. Thus, 
  \[  f^*\alpha^m_k(\zeta) =  \left(\frac{m}{\zeta^{m+1}} + h(\zeta) \right) d\zeta \]
  where the second term is holomorphic and non-singular in $\zeta$.  
 \end{proof}

 Henceforth, by $\alpha^m_k$ we mean the meromorphic extension to $\mathscr{R}$.  
 
\end{subsection}
\begin{subsection}{Faber series of one-forms}
 In this section we define the Faber-Tietz series and show that if the caps are quasidisks, then the Faber-Tietz series of any $L^2$ holomorphic one-form has a unique Faber series converging in $L^2$.
 
 It was shown in \cite[Corollary 7.5]{Schippers_Staubach_scattering} that,  
 every cohomology class on $\riem$ has a representative in  $\mathcal{A}(\riem)$. We can therefore make the following statement. 
 \begin{proposition} \label{pr:remove_some_cohomology} Let $(\mathscr{R},\Omega,f)$ be a capped surface with conformal maps and $\riem = \mathscr{R} \backslash \mathrm{cl} \, \Omega$.  
  Let $\nu \in \mathcal{A}(\riem)$. There are unique one-forms 
  \[  \tau = \sum_{k=1}^{g}c_k \gamma_k \]
  and 
  \[
  \beta = \sum_{k=1}^{n-1} \epsilon_k \beta_k.   
  \] 
  
  such that 
  \[  \nu - \beta-\tau  \in \overline{X}(\riem). \]
 \end{proposition}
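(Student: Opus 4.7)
The plan is to translate membership in $\overline{X}(\riem)$ into conditions on periods and then apply a dimension count. Recall that $\riem$ is homotopy equivalent to a compact surface of genus $g$ with $n$ boundary components, so that $H_1(\riem)$ is free of rank $2g+n-1$; one may take as generators lifts of $a_1,\ldots,a_g,b_1,\ldots,b_g$ together with the boundary loops $\Gamma_1,\ldots,\Gamma_{n-1}$, since $\Gamma_n$ is determined by the relation $\sum_{k=1}^{n}\Gamma_k=0$ in $H_1(\riem)$ (a slightly contracted copy of $\mathrm{cl}\,\riem$ is a compact $2$-chain inside $\riem$ whose boundary is homologous to $\sum_k\Gamma_k$). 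A closed one-form on $\riem$ is exact iff all its periods vanish. Since any $\tau\in\mathcal{A}(\mathscr{R})$ and any $\overline{\sigma}\in\overline{\mathcal{A}(\mathscr{R})}$ has vanishing $\Gamma_k$-periods (each $\Gamma_k$ bounds the disk $\Omega_k$ in $\mathscr{R}$), a form $\alpha\in\mathcal{A}(\riem)$ lies in $\overline{X}(\riem)$ if and only if $\int_{\Gamma_k}\alpha=0$ for every $k$ and the $a_j,b_j$-periods of $\alpha$ agree with those of some $\overline{\sigma}\in\overline{\mathcal{A}(\mathscr{R})}$.

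For existence, first pick $\beta$ to annihilate the boundary periods. By the residue theorem, $\int_{\Gamma_j}\beta_k=2\pi i\,\delta_{jk}$ for $1\le j,k\le n-1$ and $\int_{\Gamma_n}\beta_k=-2\pi i$, so the $(n-1)\times(n-1)$ period matrix is $2\pi i\, I$ and $\epsilon_k:=(2\pi i)^{-1}\int_{\Gamma_k}\nu$ uniquely solves $\int_{\Gamma_j}(\nu-\beta)=0$ for $j=1,\ldots,n-1$. The remaining equation $\int_{\Gamma_n}(\nu-\beta)=0$ holds automatically: $\sum_k\int_{\Gamma_k}\nu=0$ since $\nu$ is closed on $\riem$ and $\sum_k\Gamma_k$ is null-homologous in $\riem$, while $\sum_k\int_{\Gamma_k}\beta_j=0$ by the residue theorem on the compact surface $\mathscr{R}$. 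To determine $\tau$, use the isomorphism $\mathcal{A}(\mathscr{R})\oplus\overline{\mathcal{A}(\mathscr{R})}\to\mathbb{C}^{2g}$ given by the $a_j,b_j$-period map (existence and uniqueness of a harmonic representative with prescribed periods on the compact surface $\mathscr{R}$, combined with the direct sum decomposition $\mathcal{A}_{\mathrm{harm}}(\mathscr{R})=\mathcal{A}(\mathscr{R})\oplus\overline{\mathcal{A}(\mathscr{R})}$); let $(\tau,\overline{\sigma})$ be the unique pair whose $a_j,b_j$-periods match those of $\nu-\beta$. Then $\nu-\beta-\tau-\overline{\sigma}$ has all $2g+n-1$ periods equal to zero, hence is exact on $\riem$, so $\nu-\beta-\tau\in\overline{X}(\riem)$.

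Uniqueness exploits the same decoupling. If $\beta,\tau$ and $\beta',\tau'$ both satisfy the conclusion, then $(\beta'-\beta)+(\tau'-\tau)\in\overline{X}(\riem)$, so there exists $\overline{\sigma}''\in\overline{\mathcal{A}(\mathscr{R})}$ with $(\beta'-\beta)+(\tau'-\tau)-\overline{\sigma}''$ exact on $\riem$. Taking $\Gamma_k$-periods for $k\le n-1$ and noting that $\tau'-\tau$ and $\overline{\sigma}''$ contribute zero to boundary periods forces $\epsilon_k'=\epsilon_k$, whence $\beta=\beta'$. The residual equation says that $(\tau'-\tau)-\overline{\sigma}''$, a harmonic form on the compact surface $\mathscr{R}$, has vanishing $a_j,b_j$-periods and therefore vanishes; the uniqueness of the splitting $\mathcal{A}(\mathscr{R})\oplus\overline{\mathcal{A}(\mathscr{R})}$ then yields $\tau=\tau'$ (and $\overline{\sigma}''=0$). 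The main subtlety is the bookkeeping around the $n$-th boundary period, handled by the homological relation $\sum_k\Gamma_k=0$ together with the residue theorem on $\mathscr{R}$; everything else is linear algebra on periods.
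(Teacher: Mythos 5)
Your proof is correct and follows essentially the same route as the paper's: kill the boundary periods with $\beta$ using the residue normalization, match the remaining $a_j,b_j$-periods of $\nu-\beta$ with a harmonic form on $\mathscr{R}$, and split that form into its holomorphic and anti-holomorphic parts. You go somewhat further than the paper in that you track the $2\pi i$ normalization and the automatic vanishing of the $n$-th boundary period explicitly, and you supply the uniqueness argument (via the decoupling of boundary and handle periods and the fact that a harmonic form on the compact surface with vanishing periods is zero), which the paper's proof leaves implicit.
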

 \begin{proof}
  Given $\nu$, set 
  \[  \epsilon_k = \int_{\partial_k \riem} \nu. \]
  By the integral over $\partial_k \riem$, we mean the integral over any smooth curve $\gamma$ homotopic to $\partial_k \riem$. This is independent of $\gamma$ so it is well-defined.
  If we set  
  \[ \beta = \sum_{k=1}^n \epsilon_k \beta_k \]
  then the boundary periods of $\nu-\beta$ are zero. There 
  is a unique  harmonic one-form $\rho \in \mathcal{A}_{\mathrm{harm}}(\mathscr{R})$ such that
  \[  \int_{a_k} (\nu - \beta)  = \int_{a_k}  \rho \ \ \text{and} \ \  \int_{b_k} (\nu - \beta) = \int_{b_k} \rho, \ \ \ k =1,\ldots,g, \]
  so that
  \[   \nu - \beta - \rho \in \mathcal{A}^e(\riem).  \]
  Using the decomposition 
  \[  \rho = \tau + \overline{\sigma}, \ \ \ \delta,\sigma \in \mathcal{A}(\mathscr{R}),    \]
  we obtain that $\nu - \tau - \beta \in \overline{X}(\riem)$ as claimed.
 \end{proof}
\begin{remark}
 The coefficients $\epsilon_k$ and $c_k$ can be computed using integral formulas. The first formula  
 \begin{equation} \label{eq:boundary_Faber_coefficients}
    \epsilon_k   = \int_{\partial_k \riem} \nu
 \end{equation}
 arose already in the proof above.  An explicit expression for $c_k$ can be obtained given a specific choice of basis for $\mathcal{A}_{\mathrm{harm}}(\mathscr{R})$. For example, if 
 the basis is as in Remark \ref{re:normalized_a_periods}, we solve the equations
 \begin{align*}
     A_j & = \int_{a_j} \sum_k \left(c_k \gamma_k + d_k \overline{\gamma_k} \right)  =   c_j + d_j \\
     B_j & =   \int_{b_j} \sum_k \left(c_k \gamma_k + d_k \overline{\gamma_k} \right)   =   \sum_k \left( \Pi_{jk} c_k + \overline{\Pi}_{jk} d_j \right) 
 \end{align*}
 for $c_k$ and $d_k$, where 
 \[   A_j = \int_{a_j} (\nu-\beta),  \  \ \ \ \ B_j = \int_{b_j} (\nu - \beta).      \]
 Setting
 \[ \tau=\sum_{k=1}^n c_k \gamma_k \]
 it is easily seen that 
 $\nu-\beta-\tau \in \overline{X}(\riem)$. 
 
 \end{remark} 
 
 \vspace{0.8cm}

 We can now define the Faber multi-series as follows.
  Assume that the caps are quasidisks.  Let $\nu \in \mathcal{A}(\riem)$, and let 
  \[  \beta= \sum_{k=1}^n \epsilon_k \beta_k  \]
  be as in Proposition \ref{pr:remove_some_cohomology}. Define  $\overline{\delta}$ and $\tau$ by
  \[  
  (\overline{\delta},\tau) = \Theta^{-1} ( \nu-\beta)  
  \]
  and observe that
  \[  \tau = \sum_{k=1}^g c_k \gamma_k.  \]
  for some $c_k$.  Finally set 
  \[  \overline{\delta}_k = \left. \overline{\delta} \right|_{\Omega_k}   \]
  for $k=1,\ldots,n$.  Now we can expand $f_k^*\overline{\delta_k}$ in a power series in $\bar{z}$
  \begin{equation} \label{eq:Faber_coefficients}
   f_k^*\overline{\delta_k} = \sum_{m=1}^\infty m h^k_m   \overline{z}^{m-1} d\bar{z}.
  \end{equation} 
  
  \begin{definition} \label{de:Faber_series}  Let $(\mathscr{R},\Omega,f)$ be a capped surface with conformal maps and $\riem = \mathscr{R} \backslash \mathrm{cl} \, \Omega$.  Assume that the caps are quasidisks.
   With notation as above, the Faber series of $\nu \in \mathcal{A}(\riem)$ is 
   \begin{equation} \label{eq:Faber_series}
     \sum_{k=1}^n \epsilon_k \beta_k +  \sum_{k=1}^g c_k \gamma_k  + \sum_{m=1}^\infty \sum_{k=1}^n  h_m^k \alpha^m_k.    
   \end{equation}
  \end{definition}
  
  It follows from the definition of the Faber-Tietz forms and Theorem \ref{th:big_isomorpism} that
  \begin{theorem} \label{th:Faber_series_forms} Let $(\mathscr{R},\Omega,f)$ be a capped surface with conformal maps and $\riem = \mathscr{R} \backslash \mathrm{cl} \, \Omega$. Assume that the caps are quasidisks.
  	 
   Let $\nu \in \mathcal{A}(\riem)$. The Faber series of Definition \ref{de:Faber_series} converges in $\mathcal{A}(\riem)$ to $\nu$. The Faber series is the unique series of the form \eqref{eq:Faber_series} converging to $\nu$ in $\mathcal{A}(\riem)$.    
  \end{theorem}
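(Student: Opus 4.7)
The plan is to reduce the statement to the standard Bergman-space expansion on the disk by conjugating with the isomorphism $\Theta$ of Theorem \ref{th:big_isomorpism}. The input $\nu$ is first peeled apart into its cohomological/period pieces (the $\beta_k$'s and $\gamma_k$'s) using Proposition \ref{pr:remove_some_cohomology}, leaving a residual in $\overline{X}(\riem)$ whose $\mathbf{T}$-preimage lives in $\overline{\mathcal{A}(\Omega)}$, where the classical complete orthogonal system $\{m\bar{z}^{m-1}\,d\bar{z}\}_{m\geq 1}$ in $\overline{\mathcal{A}(\disk)}$ can be invoked componentwise.

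First I would apply Proposition \ref{pr:remove_some_cohomology} to obtain the unique $\beta=\sum\epsilon_k\beta_k$ and $\tau=\sum c_k\gamma_k$ with $\nu-\beta-\tau\in\overline{X}(\riem)$. Theorem \ref{co:Tonetwo_iso_full} gives a unique $\overline{\delta}\in\overline{\mathcal{A}(\Omega)}$ with $\mathbf{T}\overline{\delta}=\nu-\beta-\tau$; under the decomposition $\overline{\mathcal{A}(\Omega)}=\bigoplus_k\overline{\mathcal{A}(\Omega_k)}$ write $\overline{\delta}=(\overline{\delta}_1,\dots,\overline{\delta}_n)$. Since anti-holomorphic polynomials are dense in $\overline{\mathcal{A}(\disk)}$ and $\{m\bar{z}^{m-1}\,d\bar{z}\}_{m\geq 1}$ is a complete orthogonal system there, the power series \eqref{eq:Faber_coefficients} converges to $f_k^*\overline{\delta}_k$ in $\overline{\mathcal{A}(\disk)}$. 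Conformal invariance makes $f_k^*$ unitary, so after pulling back and summing over $k$ the series $\sum_{k,m}h^k_m(f^{-1})^*(e^m_k)$ converges to $\overline{\delta}$ in $\overline{\mathcal{A}(\Omega)}$. Applying the bounded operator $\mathbf{T}$ term by term yields convergence of $\sum_{k,m}h^k_m\alpha^m_k$ to $\mathbf{T}\overline{\delta}$ in $\mathcal{A}(\riem)$, and adding $\beta+\tau$ produces the Faber series \eqref{eq:Faber_series} converging to $\nu$.

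For uniqueness, suppose $\sum\epsilon'_k\beta_k+\sum c'_k\gamma_k+\sum_{k,m}(h')^k_m\alpha^m_k$ also converges to $\nu$ in $\mathcal{A}(\riem)$. Each $\alpha^m_k$ lies in $\mathbf{T}\bigl(\overline{\mathcal{A}(\Omega)}\bigr)=\overline{X}(\riem)$, which is a closed subspace of $\mathcal{A}(\riem)$ since $\mathbf{T}$ is an isomorphism onto it by Theorem \ref{co:Tonetwo_iso_full}. Hence the tail converges to some $\xi'\in\overline{X}(\riem)$, and the uniqueness clause of Proposition \ref{pr:remove_some_cohomology} forces $\epsilon'_k=\epsilon_k$, $c'_k=c_k$, and therefore $\xi'=\mathbf{T}\overline{\delta}$. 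Applying the bounded inverse $\mathbf{T}^{-1}:\overline{X}(\riem)\to\overline{\mathcal{A}(\Omega)}$ and pulling back by each $f_k$ converts the tail into an expansion of $f_k^*\overline{\delta}_k$ in the complete orthogonal system $\{m\bar{z}^{m-1}\,d\bar{z}\}_{m\geq 1}$; uniqueness of Bergman coefficients on the disk then gives $(h')^k_m=h^k_m$.

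The main obstacle is keeping the two senses of convergence properly synchronized: one has to justify passing a series through $\mathbf{T}$ and $\mathbf{T}^{-1}$, and that $\overline{X}(\riem)$ is closed inside $\mathcal{A}(\riem)$ so that the tail $\sum_{k,m}(h')^k_m\alpha^m_k$ has a well-defined limit to which $\mathbf{T}^{-1}$ can be applied. Both of these rest entirely on the closed-range/isomorphism assertion in Theorem \ref{co:Tonetwo_iso_full}, after which the proof is essentially a bookkeeping transfer of the classical expansion on the disk through the operator $\mathbf{T}$ together with the uniqueness in Proposition \ref{pr:remove_some_cohomology}.
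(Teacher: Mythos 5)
Your argument is correct and is essentially the expansion of the paper's one-line justification (the paper simply asserts the theorem ``follows from the definition of the Faber-Tietz forms and Theorem \ref{th:big_isomorpism}''): you transfer the Bergman-space monomial expansion on $\disk$ through the unitary pullbacks and the bounded isomorphism $\mathbf{T}$, and get uniqueness from the closedness of $\overline{X}(\riem)$ together with the uniqueness clause of Proposition \ref{pr:remove_some_cohomology}. Your use of Theorem \ref{co:Tonetwo_iso_full} in place of $\Theta$ is consistent with the paper's construction, since $(\overline{\delta},\tau)=\Theta^{-1}(\nu-\beta)$ is exactly the pair produced by Proposition \ref{pr:remove_some_cohomology} followed by $\mathbf{T}^{-1}$ on $\overline{X}(\riem)$.
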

  \begin{remark}
   Note that every term in the Faber series is a meromorphic differential in $\mathcal{A}(\mathscr{R})$ with poles only at the fixed points $z_1,\ldots,z_n$.  
  \end{remark}
  
  Furthermore, it converges uniformly on compact sets, in the following sense.  
  \begin{theorem} \label{th:uniform_conv_forms}
   Let $\zeta=\phi(z)$ be any coordinate chart on $\riem$, taking $U$ into the complex plane, so that in coordinates and $\nu=G(\zeta)d\zeta$ and the 
   $M$th
   partial sum is 
   \[   g_M(\zeta)d\zeta = \sum_{k=1}^n \epsilon_k \beta_k +  \sum_{k=1}^g c_k \gamma_k  + \sum_{m=1}^M \sum_{k=1}^n  h_m^k \alpha^m_k.  \]   
   
   For any compact set $K \subseteq U,$ $g_M \rightarrow G$  uniformly on $K$ as $M \rightarrow \infty$. 
  \end{theorem}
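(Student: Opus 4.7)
The plan is to upgrade the $L^2$ convergence established in Theorem~\ref{th:Faber_series_forms} to uniform convergence on compact subsets, exploiting the standard fact that in any Bergman space of holomorphic functions, pointwise evaluation at an interior point is controlled by the $L^2$ norm on a small disk around it.

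First I will translate the hypothesis into the chart. Writing a one-form as $\omega = h(\zeta)\,d\zeta$ in coordinates gives $\omega \wedge \ast \overline{\omega} = 2|h|^2\,dA_\zeta$, so the convergence $g_M\,d\zeta \to G\,d\zeta$ in $\mathcal{A}(\riem)$ implies
\[
\iint_{\phi(U)} |g_M - G|^2 \, dA_\zeta \longrightarrow 0,
\]
since the intrinsic norm on $\riem$ dominates its restriction to $U$. Note also that $g_M - G$ is holomorphic throughout $\phi(U)$, because the Faber-Tietz forms $\alpha^m_k$ have their only singularities at the points $z_1,\ldots,z_n \in \Omega$, which lie outside $\riem$, and $\beta_k, \gamma_k$ are likewise holomorphic on $\riem$.

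Next, I fix a compact set $K \subseteq U$ and its image $K' = \phi(K) \subset \phi(U)$. Since $\phi(U)$ is open in $\mathbb{C}$ and $K'$ is compact, there exists $r > 0$ such that the closed disk $\overline{D(\zeta_0, r)}$ lies in $\phi(U)$ for every $\zeta_0 \in K'$. The mean-value identity for holomorphic functions applied to $g_M - G$ on $D(\zeta_0, r)$, followed by Cauchy--Schwarz, yields
\[
|g_M(\zeta_0) - G(\zeta_0)|^2 \;\le\; \frac{1}{\pi r^2} \iint_{D(\zeta_0, r)} |g_M - G|^2 \, dA \;\le\; \frac{1}{\pi r^2} \iint_{\phi(U)} |g_M - G|^2 \, dA.
\]
The right-hand side is independent of $\zeta_0$ and tends to zero by the first step, so $\sup_{\zeta_0 \in K'} |g_M(\zeta_0) - G(\zeta_0)| \to 0$, which is exactly the claimed uniform convergence on $K$.

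I do not expect any serious obstacle: this is the routine Bergman-space argument for promoting $L^2$ convergence to locally uniform convergence. The only point requiring any care is the coordinate comparison of the intrinsic form norm on $\riem$ with the planar $L^2$ norm of the coefficient function on $\phi(U)$, which is immediate from the expression for $\omega \wedge \ast \overline{\omega}$ in local coordinates.
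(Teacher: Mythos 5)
Your argument is correct and is essentially the paper's proof: the paper also reduces to the statement that $L^2$ convergence in a planar Bergman space implies uniform convergence on compacta, citing the standard sup-norm bound $\sup_K |h| \le C_K \|h\,d\zeta\|$, which you simply prove in full via the mean-value property and Cauchy--Schwarz.
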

  \begin{proof}
   The sequence $g_M(\zeta)d\zeta$ converges to $G(\zeta) d\zeta$ in the Bergman space $\mathcal{A}(\phi(U))$, where $\phi(U)$ is a subset of the complex plane. By a standard result for Bergman spaces, there is a constant $C_K$ such that 
   \[   \sup_{\zeta \in K} |h(\zeta)| \leq C_k \| h(\zeta) d\zeta \|_{\mathcal{A}(U)}   \]
   Applying this to $h(\zeta)=g_M(\zeta) - G(\zeta)$ proves the claim. 
  \end{proof}
\end{subsection}
\begin{subsection}{Conformal invariance of Faber-Tietz series}

In this section we observe that the Faber series is invariant under conformal maps $g:\mathscr{R}_1 \rightarrow \mathscr{R}_2$.  This is an automatic consequence of the construction. Letting $\mathcal{G}_k$ and $\mathbf{T}_k$ denote Green's function and the Schiffer operator on $\mathscr{R}_k$ for $k=1,2$, we have by uniqueness $\mathcal{G}_1 = \mathcal{G}_2 \circ (g \times g)$. Thus the operators $\mathbf{T}_k$ are invariant in the sense that  $g^* \mathbf{T}_2 = \mathbf{T}_1 g^*$ \cite{Schippers_Staubach_scattering}. Furthermore the function spaces are conformally invariant, e.g. $g^*\mathcal{A}(\riem_2) = \mathcal{A}(\riem_1)$.  We then have 
\begin{theorem}[conformal invariance of Faber-Tietz forms] \label{th:conformal_invariance_Faber_Tietz_forms}
	Let $\mathscr{R}$, $\Omega=\Omega_1 ,\ldots,\Omega_n$ be a capped surface with conformal maps $f=(f_1,\ldots,f_n)$. Let $g:\mathscr{R} \rightarrow \hat{\mathscr{R}}$ be a conformal map. Setting $\hat{\Omega}_k=g_k(\Omega_k)$ and $\hat{f}_k=f_k \circ g$, we have that $\hat{\mathscr{R}}$, $\hat{\Omega}_1,\ldots,\hat{\Omega}_n$ is a capped surface with conformal maps $\hat{f}_1,\ldots,\hat{f}_n$.  
	
	If $\hat{\alpha}_k^m$ are the Faber-Tietz forms associated to  $(\hat{\mathscr{R}}, \hat{\Omega},\hat{f})$, then the Faber-Tietz forms associated to $(\mathscr{R},\Omega,f)$ are 
	\[ \alpha^m_k = g^* \hat{\alpha}^m_k. \]
\end{theorem}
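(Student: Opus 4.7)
The plan is to chase definitions through the conformal map, relying on the intrinsic (coordinate-free) character of every ingredient in the construction of the Faber-Tietz forms. Since $\hat{\mathbf{T}}$, $g^*$, and the pullback $(\hat{f}^{-1})^*$ are all functorial constructions and the seed forms $e^m_k$ live on the fixed disk $\disk$ (independent of the ambient surface), the identity $\alpha^m_k = g^* \hat{\alpha}^m_k$ should fall out as a short diagram chase.

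First I would verify the data: because $g$ is a biholomorphism and the $f_k$ are conformal maps of $\disk$ onto quasidisks with disjoint closures in $\mathscr{R}$, the sets $\hat{\Omega}_k = g(\Omega_k)$ are quasidisks with disjoint closures in $\hat{\mathscr{R}}$ (quasidisks being preserved by local conformal maps and disjointness by the bijectivity of $g$). The intended maps are $\hat{f}_k = g \circ f_k : \disk \to \hat{\Omega}_k$ (the typo in the statement notwithstanding, since the composition $f_k \circ g$ does not even typecheck); these are conformal onto their images, so $(\hat{\mathscr{R}}, \hat{\Omega}, \hat{f})$ is a capped surface with conformal maps. Note also that $\hat{f}_k^{-1} = f_k^{-1} \circ g^{-1}$, and the seed forms $e^m_k \in \overline{\mathcal{A}(\disk)}^{\oplus n}$ are identical in both settings.

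Next I would assemble the key identities. Contravariance of pullback gives
\[
(\hat{f}_k^{-1})^* = (g^{-1})^* \circ (f_k^{-1})^*,
\]
and uniqueness of Green's function under the biholomorphism $g$ yields the intertwining relation $g^* \hat{\mathbf{T}} = \mathbf{T} g^*$ cited from \cite{Schippers_Staubach_scattering}. Combining these, one computes
\[
g^* \hat{\alpha}^m_k
= g^* \hat{\mathbf{T}} (\hat{f}^{-1})^* e^m_k
= \mathbf{T} g^* (g^{-1})^* (f^{-1})^* e^m_k
= \mathbf{T} (f^{-1})^* e^m_k
= \alpha^m_k,
\]
using $g^* (g^{-1})^* = (g^{-1} \circ g)^* = \mathrm{id}$ in the third equality.

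The only step that demands any care is the intertwining $g^* \hat{\mathbf{T}} = \mathbf{T} g^*$; but this is immediate from the uniqueness characterization of $\mathscr{G}$ (which forces $\mathcal{G}_1 = \mathcal{G}_2 \circ (g \times g)$) together with the definition of $\mathbf{T}$ as an integral against $\partial_w \partial_z \mathscr{G}$, and is already available from the cited reference. Everything else is purely formal functoriality, so the proof should be no more than the displayed chain of equalities together with a brief remark justifying each equality.
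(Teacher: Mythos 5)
Your proposal is correct and follows essentially the same route as the paper, which derives the result from the uniqueness of Green's function (giving $\mathcal{G}_1 = \mathcal{G}_2 \circ (g \times g)$), the resulting intertwining relation $g^*\hat{\mathbf{T}} = \mathbf{T}g^*$, and the conformal invariance of the function spaces; your displayed chain of equalities simply makes explicit what the paper calls ``an automatic consequence of the construction.'' Your reading of $\hat{f}_k = g \circ f_k$ is indeed the intended one.
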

It immediately follows that the Faber-Tietz series are also conformally invariant.
\begin{theorem} \label{th:conformal_invariance_Faber_Tietz_series}
	Let $(\mathscr{R},\Omega,f)$ and $(\hat{\mathscr{R}},\hat{\Omega},\hat{f})$ be capped surfaces with conformal maps, and $g:\mathscr{R} \rightarrow \hat{\mathscr{R}}$ be a conformal bijection. Assume that the caps $\Omega$ are quasidisks (and thus so are $\hat{\Omega}$).  Let $\riem = \mathscr{R} \backslash \mbox{cl}\, \Omega$ and similarly for $\hat{\riem}$. Let $\hat{\beta}_k$, $k=1,\ldots,n$ and  $\hat{\gamma}_k$, $k=1,\ldots,g$ be as in Definition \ref{de:Faber_series} and set 
	\[ \beta_k = g^* \hat{\beta}_k,\ \ \  k =1,\ldots,n \ \ \ \mbox{and} \ \ \  \gamma_k = g^* \hat{\gamma}_k,\ \ \  k =1,\ldots,g.  \]
	
	 If the Faber-Tietz series of $\hat{\nu} \in \mathcal{A}(\riem)$ is 
	\[  \hat{\nu} =  \sum_{k=1}^n \epsilon_k \hat{\beta}_k +  \sum_{k=1}^g c_k \hat{\gamma}_k  + \sum_{m=1}^\infty \sum_{k=1}^n  h_m^k \alpha^m_k  \] 
	then the Faber-Tietz series of $\nu = g^* \hat{\nu}$ is  
	\[    \sum_{k=1}^n \epsilon_k \beta_k +  \sum_{k=1}^g c_k \gamma_k  + \sum_{m=1}^\infty \sum_{k=1}^n  h_m^k \alpha^m_k.      \]
\end{theorem}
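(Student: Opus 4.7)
The plan is to apply the pullback $g^*$ termwise to the Faber-Tietz expansion of $\hat{\nu}$ and then invoke the uniqueness clause of Theorem \ref{th:Faber_series_forms} on $(\mathscr{R},\Omega,f)$. Since $g$ is a biholomorphism restricting to a biholomorphism $\riem \to \hat{\riem}$, the induced map $g^*:\mathcal{A}(\hat{\riem}) \to \mathcal{A}(\riem)$ is a unitary Hilbert space isomorphism; in particular it is continuous, so it commutes with $L^2$ limits of series.

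First I would check that the elements $\beta_k = g^*\hat{\beta}_k$ and $\gamma_k = g^*\hat{\gamma}_k$ chosen in the statement really agree with the objects occurring in Definition \ref{de:Faber_series} for $(\mathscr{R},\Omega,f)$. For $\beta_k$: pullback by a biholomorphism preserves pole locations and residues, so $g^*\hat{\beta}_k$ is a meromorphic one-form on $\mathscr{R}$ with simple poles of residues $+1$ and $-1$ at $z_k = g^{-1}(\hat{z}_k)$ and $z_n = g^{-1}(\hat{z}_n)$ and no others; by the uniqueness characterization of $\beta_k$ recalled in the excerpt, this coincides with the intrinsic $\beta_k$. For $\gamma_k$: a basis of $\mathcal{A}(\mathscr{R})$ is not canonical, so the pullback basis is simply adopted as our choice.

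Next I would apply $g^*$ to the Faber-Tietz expansion of $\hat{\nu}$, using Theorem \ref{th:conformal_invariance_Faber_Tietz_forms} to get $g^*\hat{\alpha}^m_k = \alpha^m_k$ and the continuity of $g^*$ to pass it through the infinite sum. This yields
\[
\nu \;=\; g^*\hat{\nu} \;=\; \sum_{k=1}^n \epsilon_k \beta_k \;+\; \sum_{k=1}^g c_k \gamma_k \;+\; \sum_{m=1}^\infty \sum_{k=1}^n h^k_m\, \alpha^m_k,
\]
with convergence in $\mathcal{A}(\riem)$. This is a series of the form \eqref{eq:Faber_series} converging to $\nu$ in $\mathcal{A}(\riem)$, so the uniqueness statement in Theorem \ref{th:Faber_series_forms} (which applies because quasidiskhood of the caps is preserved by the biholomorphism $g$) forces it to be \emph{the} Faber-Tietz series of $\nu$, with the asserted coefficients.

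There is no substantive obstacle: the theorem is essentially built into the intrinsic, coordinate-free nature of the construction of $\mathbf{T}$, the Faber-Tietz forms, and the basis one-forms $\beta_k$. The only mild care required is the bookkeeping verifying that the pullbacks $g^*\hat{\beta}_k$ and $g^*\hat{\gamma}_k$ can consistently be identified with the analogous data on $\mathscr{R}$; this reduces to the uniqueness of $\beta_k$ and to a free choice of basis for $\mathcal{A}(\mathscr{R})$, as discussed above.
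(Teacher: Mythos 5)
Your proposal is correct and follows essentially the same route as the paper, which proves the theorem by pulling back the Faber--Tietz series term by term under $g$, using Theorem \ref{th:conformal_invariance_Faber_Tietz_forms} and the fact that pull-back is an isometry (hence continuous, so it passes through the $L^2$ limit), then concluding by uniqueness. Your added bookkeeping identifying $g^*\hat{\beta}_k$ with the intrinsic $\beta_k$ via uniqueness of residues, and noting that the $\gamma_k$ basis is simply adopted by choice, is a reasonable elaboration of details the paper leaves implicit.
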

This is obtained by pulling back the Faber-Tietz series term by term under $g$, using Theorem \ref{th:conformal_invariance_Faber_Tietz_forms} together with the fact that pull-back is an isometry. 

\end{subsection}
\begin{subsection}{Recovery of the Faber series on the sphere}
 In this section we show how to recover the Faber series on the sphere $\mathscr{R} = \sphere$.  See e.g. \cite{Anderson,Duren_book,Pommerenkebook,Suetin} for the standard definition on the sphere.  In particular
 we obtain the convergence theorem of \c{C}avu\c{s} \cite{Cavus} and Shen \cite{ShenFaber} (in one direction).  
 
 We fix a single conformal map $f:\mathbb{D} \rightarrow \Omega$, though the same procedure works for arbitrarily many non-overlapping quasidisks.   
 Observe that there are no non-trivial holomorphic one-forms on the sphere, that is $\mathcal{A}(\mathscr{R})$ consists only of the form which is identically zero. So $\Theta$ is just $\mathbf{T}$ up to the trivial second component, and we can use $\mathbf{T}$ in place of $\Theta$ in the construction of the Faber-Tietz series.  Furthermore $\riem = \mathscr{R} \backslash \mathrm{cl} \,\Omega$ is simply connected, so $\overline{X}(\riem) = \mathcal{A}(\riem)$ and Theorem \ref{co:Tonetwo_iso_full} says that $\mathbf{T}$ is an isomorphism. 
 
 Assume that $\Omega$ is a bounded quasidisk, so that $\infty$ is in the interior of $\riem$. It is easily checked that 
 \[ \mathscr{G}(w,w_0;z,q) = \log \left| \frac{(z-w_0)(w-q)}{(w-z)(q-w_0)} \right|   \]
 so 
 \[ - \frac{1}{\pi i} \partial_w \mathscr{G}(w,w_0;z,q) = \frac{1}{2 \pi i} \left( \frac{1}{w-z} - \frac{1}{w-q} \right) dw. \]  Setting $q=\infty$ we obtain the Cauchy kernel
 \begin{equation*} 
  - \frac{1}{\pi i} \partial_w \mathscr{G}(w,w_0;z,\infty) = \frac{1}{2 \pi i}  \frac{dw}{w-z}
 \end{equation*}
 and the kernel of the Schiffer operator $\mathbf{T}$ is  
 \begin{equation} \label{eq:Schiffer_on_sphere}
   \frac{1}{\pi i} \partial_z \partial_w \mathscr{G}(w,w_0;z,\infty) = - \frac{1}{ \pi }  \frac{dw \,dz}{(w-z)^2}.
 \end{equation}
 The Schiffer operator takes the form, for $\alpha = \overline{h(z)} d\bar{z} \in \overline{\mathcal{A}(\Omega)}$, 
 \[ \left[ \mathbf{T} \alpha \right](z) =  \frac{1}{\pi} \iint_{\Omega} \frac{\overline{h(w)}}{(w-z)^2} \frac{d\bar{w} \wedge dw}{2i} dz.  \]
 Thus  one direction of Napalkov and Yulmukhametov's theorem \cite{Nap_Yulm} mentioned in the introduction is a special case of Theorem \ref{th:big_isomorpism}.  
 \begin{corollary}
 	Let $\riem$ be a quasidisk in the sphere and $\Omega$ be the complement of its closure. Then  $\mathbf{T}:\overline{\mathcal{A}(\Omega)} \rightarrow \mathcal{A}(\riem)$ is an isomorphism.
 \end{corollary}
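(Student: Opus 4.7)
The plan is to deduce this corollary directly from Theorem \ref{co:Tonetwo_iso_full} applied with $\mathscr{R} = \sphere$, by verifying that in this special case the codomain $\overline{X}(\riem)$ collapses to all of $\mathcal{A}(\riem)$.

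First I would interpret the data of the corollary as a capped surface in the sense of the paper. The hypothesis is that $\riem$ is a quasidisk in the sphere and $\Omega = \sphere \setminus \mathrm{cl}\, \riem$ is its complement. Because a quasicircle in $\sphere$ is a quasicircle (under any chart) regardless of which side one regards as the interior, $\Omega$ is itself a simply connected domain bounded by a quasicircle, hence a quasidisk in the sense of the paper. Moreover $\mathrm{cl}\, \Omega$ and $\mathrm{cl}\, \riem$ meet only in the shared boundary quasicircle, and $\riem = \sphere \setminus \mathrm{cl}\, \Omega$. Thus $(\sphere, \Omega)$ is a capped surface to which Theorem \ref{co:Tonetwo_iso_full} applies, giving that $\mathbf{T}:\overline{\mathcal{A}(\Omega)} \to \overline{X}(\riem)$ is an isomorphism.

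Next I would simplify the codomain. By definition,
\[ \overline{X}(\riem) = \{ \alpha \in \mathcal{A}(\riem) \,:\, \exists\, \overline{\sigma} \in \overline{\mathcal{A}(\sphere)} \ \text{with} \ \alpha - \overline{\sigma} \in \mathcal{A}^e(\riem) \}. \]
The sphere carries no non-trivial holomorphic one-forms, so $\mathcal{A}(\sphere) = \{0\}$ and the only admissible $\sigma$ is $0$. The condition therefore reduces to $\alpha \in \mathcal{A}^e(\riem)$. But $\riem$ is a quasidisk and hence simply connected, so every closed one-form on $\riem$ is exact; in particular every holomorphic one-form on $\riem$ lies in $\mathcal{A}^e(\riem)$. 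Hence $\overline{X}(\riem) = \mathcal{A}(\riem)$, and the isomorphism from Theorem \ref{co:Tonetwo_iso_full} becomes $\mathbf{T}:\overline{\mathcal{A}(\Omega)} \to \mathcal{A}(\riem)$, as claimed.

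There is no real obstacle here beyond bookkeeping: the one point worth emphasizing is that the paper's convention makes the ``cap'' a simply connected Jordan domain, and one must check that the complement of a quasidisk in $\sphere$ qualifies, which it does. The corollary is then a clean specialization of the main isomorphism theorem and, as remarked, yields one direction of the Napalkov--Yulmukhametov theorem.
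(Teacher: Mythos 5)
Your proposal is correct and follows essentially the same route as the paper: the text preceding the corollary deduces it from Theorem \ref{co:Tonetwo_iso_full} by observing that $\mathcal{A}(\sphere)=\{0\}$ and that $\riem$ is simply connected, so $\overline{X}(\riem)=\mathcal{A}(\riem)$. Your explicit check that the complement of a quasidisk is again a quasidisk (so the capped-surface hypotheses apply with the roles of $\Omega$ and $\riem$ as stated) is a detail the paper leaves implicit, but it is the same argument.
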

In fact they show that, among Jordan domains, $\riem$ is a quasidisk if and only if $\mathbf{T}$ is an isomorphism. 
 
 Denote $e^m= m \bar{z}^{m-1}d\bar{z}$ for $m \geq 1$. 
 The Faber-Tietz forms are by definition 
 \[  
 \alpha^m = \mathbf{T} (f^{-1})^* e^m. 
 \]
 By Theorem \ref{th:Faber-Tietz_have_specified_pole}, $\alpha^m(z) = p(z) dz$ where $p \in \mathbb{C}[1/z]$ is a polynomial of degree $m-1$. 
 Let $\Phi^m$ denote the unique primitives of $\alpha^m$ in $\mathbb{C}[1/z]$ vanishing at 
 $q=\infty$.  
 
 We claim that for fixed $r_0 \in (0,1)$ 
 \begin{equation} \label{eq:equiv_to_Faber_poly_definition}
 \Phi^m(z) = \frac{1}{2 \pi i} \int_{f(|z|=r_0)} \frac{(f^{-1}(w))^m}{w-z} dw. 
 \end{equation} 
  This  is just the standard definition of the Faber polynomials associated to $f$ and $\riem$ \cite{Duren_book,Pommerenkebook,Suetin}.  To prove the claim, we compute
  \begin{align*}
   \partial_z \Phi^m & = \frac{1}{2 \pi i} \int_{f(|z|=r_0)} \frac{(f^{-1})(w)^{-m}}{(w-z)^2} dw \,dz\\
   & =   \lim_{r \nearrow 1} \frac{1}{2 \pi i}  \int_{f(|z|=r)} \frac{(f^{-1})(w)^{-m}}{(w-z)^2} dw \,dz \\
   & =   \lim_{r \nearrow 1} \frac{1}{2 \pi i}  \int_{f(|z|=r)} \frac{\overline{f^{-1}(w)}^{m}}{(w-z)^2} dw \,dz \\
   & = \frac{1}{\pi} \iint_{\Omega} \frac{m \overline{f^{-1}(w)}^{m}}{(w-z)^2} \frac{d\bar{w} \wedge dw}{2i} \, dz 
  \end{align*}
  so 
  \begin{equation}  \label{eq:Faber_and_derivative}
   \partial_z \Phi^m = [\mathbf{T} (f^{-1})^* e^m](z).  
  \end{equation}
  where in the third equality we have applied the ``anchor lemma'' \cite[Theorem 4.8]{Schippers_Staubach_Plemelj}.   
 
  Thus in this case Theorem \ref{th:Faber_series_forms} reduces to the result of \c{C}avu\c{s} \cite{Cavus} mentioned in the introduction. 
  \begin{corollary} \label{co:Faber_series_sphere_unique_exists}
  	Let $f:\disk \rightarrow \sphere$ be a conformal map onto a quasidisk $\Omega$ and let $\riem = \sphere \backslash \mbox{cl} \, \Omega$. For any $\alpha \in \mathcal{A}(\riem)$, there is a Faber series 
  	\[  \alpha = \sum_{m=1}^\infty  h_m \alpha^m   \]
  	converging to $\alpha$ in $\mathcal{A}(\riem)$. This is the unique Faber series converging to $\alpha \in \mathcal{A}(\riem)$.
  \end{corollary}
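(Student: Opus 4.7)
The plan is to obtain this as a direct specialization of Theorem~\ref{th:Faber_series_forms} to the setting $\mathscr{R}=\sphere$, $n=1$, and to identify which terms in the general Faber series~\eqref{eq:Faber_series} disappear. Since the genus of the sphere is $g=0$, the basis $\gamma_1,\ldots,\gamma_g$ is empty and the middle sum vanishes. Since $n=1$, the index range $k=1,\ldots,n-1$ for the meromorphic correction differentials $\beta_k$ is empty, so the leading sum is absent as well. (Consistent with this, the boundary period $\int_{\partial_1\riem}\nu$ vanishes automatically: any $\nu\in\mathcal{A}(\riem)$ extends holomorphically across $\infty$, forcing zero residue there.) Thus the general Faber series of Definition~\ref{de:Faber_series} collapses to $\sum_{m=1}^\infty h_m^1 \alpha^m_1$, which, relabeling $h_m:=h_m^1$ and $\alpha^m:=\alpha^m_1$, is exactly the expression in the statement.

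Next I would derive existence and uniqueness from Theorem~\ref{co:Tonetwo_iso_full}. In this setting $\riem$ is simply connected, so every $\alpha\in\mathcal{A}(\riem)$ is exact and $\overline{X}(\riem)=\mathcal{A}(\riem)$; the theorem then supplies a bounded isomorphism $\mathbf{T}:\overline{\mathcal{A}(\Omega)}\to\mathcal{A}(\riem)$. Given $\alpha\in\mathcal{A}(\riem)$, set $\overline{\delta}=\mathbf{T}^{-1}\alpha$ and expand its pullback in the orthogonal basis $\{m\bar z^{m-1}d\bar z\}_{m\geq 1}$ of $\overline{\mathcal{A}(\disk)}$:
\[
f^*\overline{\delta} \;=\; \sum_{m=1}^\infty m\,h_m\, \bar z^{m-1}\, d\bar z.
\]
Since $f^*$ is unitary, this series converges in $\overline{\mathcal{A}(\Omega)}$; applying the bounded operator $\mathbf{T}$ term by term yields the $\mathcal{A}(\riem)$-convergent series $\alpha=\sum_m h_m\alpha^m$.

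For uniqueness, if $\sum_m h_m'\alpha^m=\alpha$ in $\mathcal{A}(\riem)$, then applying the continuous inverse $\mathbf{T}^{-1}$ and then $f^*$ gives $\sum_m m h_m' \bar z^{m-1}d\bar z = \sum_m m h_m \bar z^{m-1}d\bar z$ in $\overline{\mathcal{A}(\disk)}$, and orthogonality of the monomials forces $h_m'=h_m$ for every $m$. The proof presents no genuine obstacle, since the isomorphism theorem does all the heavy lifting; the only step deserving real comment is the verification that both cohomological correction sums in~\eqref{eq:Faber_series} actually vanish in the spherical single-cap case, addressed in the first paragraph.
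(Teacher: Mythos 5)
Your proposal is correct and follows essentially the same route as the paper: the paper likewise observes that on the sphere $\mathcal{A}(\mathscr{R})=\{0\}$ and $\overline{X}(\riem)=\mathcal{A}(\riem)$, so that $\Theta$ reduces to the isomorphism $\mathbf{T}$ of Theorem~\ref{co:Tonetwo_iso_full} and the corollary is the specialization of Theorem~\ref{th:Faber_series_forms} with both cohomological correction sums empty. Your version merely spells out the expansion of $\mathbf{T}^{-1}\alpha$ in the orthogonal basis $\{m\bar z^{m-1}d\bar z\}$ and the uniqueness argument, which the paper leaves implicit.
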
 
  Here   the $h_m$ are determined by 
  \begin{equation*}
  \sum_{m=1}^\infty h_m \alpha^m 
   = \mathbf{T} (f^{-1})^* \left( \sum_{m=1}^\infty h_m e^m \right). 
  \end{equation*} 
  where 
  \[  \sum_{m=1}^\infty h_m e^m \]
   is just the power series in $\mathbb{D}$ for the unique pre-image of $\alpha$ under $\mathbf{T}(f^{-1})^*$.
   
   \c{C}avu\c{s} phrases this result in terms of functions, which in the terminology here are precisely the coefficients $h(z)$ of one-forms $\alpha(z) = h(z) dz$.  
   By equation (\ref{eq:Faber_and_derivative}), $e^m =(\Phi^m)'(z) dz$. \c{C}avu\c{s} refers to the series in terms of $(\Phi^m)'$ as generalized Faber series.   
   
   \begin{remark} Shen \cite{ShenFaber} shows that the converse holds: if any $\alpha \in \mathcal{A}(\riem)$ has such a unique expansion, then $\riem$ is a quasidisk. Equivalently, there is an isomorphism between the class of square summable sequences $\ell^2$ and $\mathcal{A}(\mathbb{D})$.  This also relates to the characterization of quasicircles in terms of the Grunsky matrix of Ch. Pommerenke and R. K\"uhnau \cite{Pommerenkebook}. 
   \end{remark}
   
  \begin{remark} 
  	 \c{C}avu\c{s} and Shen consider Faber series on an bounded domain and choose the conformal map $f$ to map onto the unbounded complement.  We have altered the conventions here for consistency. 
  \end{remark} 
  
  We can also phrase this more classically as a Faber series of functions in terms of $\Phi^m$.  Let $\mathcal{D}_\infty(\riem)$ denote the Dirichlet space of holomorphic functions vanishing at $\infty$, with norm $\| h \|_{\mathcal{D}_*(\riem)}= \| \partial h \|_{\mathcal{A}(\riem)}$. Similarly $\overline{\mathcal{D}_0(\disk)}$ is the Dirichlet space of anti-holomorphic functions on $\disk$ vanishing at $0$.   
  It follows immediately from Corollary \ref{co:Faber_series_sphere_unique_exists} and equation (\ref{eq:Faber_and_derivative}) that for any $H(z) \in \mathcal{D}_\infty(\riem)$,  there is a unique expansion 
 \[   H(z) = \sum_{m=1}^\infty h_m \Phi^m   \]
 which is convergent in $\mathcal{D}_\infty(\riem)$.  
 
 For more on Faber series for functions with $L^2$ derivatives on quasidisks in the case of the sphere, and the interrelation between the theorems of \c{C}avu\c{s}, Shen, and Napalkov-Yulmukhametov, see the survey \cite{Schippers_Staubach_Grunsky_expository}. 
\end{subsection}
\end{section}

\end{document}